\newtheorem{tm}{Theorem}
\newtheorem{defi}{Definition}
\newtheorem{rem}{Remark}
\newtheorem{lm}{Lemma}
\newtheorem{nota}{Notation}
\begin{document}

\title{A nonrealization theorem in the context of Descartes' rule of signs}
\author{Hassen Cheriha, Yousra Gati and Vladimir Petrov Kostov}
\address{Universit\'e C\^ote d'Azur, LJAD, France and 
University of Carthage, EPT - LIM, Tunisia}
\email{hassan.cheriha@unice.fr}
\address{University of Carthage, EPT - LIM, Tunisia}
\email{yousra.gati@gmail.com} 
\address{Universit\'e C\^ote d'Azur, LJAD, France} 
\email{vladimir.kostov@unice.fr}
\begin{abstract}
For a real degree $d$ polynomial $P$ with all 
nonvanishing coefficients, with $c$ sign changes and $p$ sign preservations 
in the sequence of its coefficients ($c+p=d$), Descartes' rule of signs says that $P$ has $pos\leq c$ positive and 
$neg\leq p$ negative roots, where $pos\equiv c($\, mod $2)$ and 
$neg\equiv p($\, mod $2)$. For $1\leq d\leq 3$, 
for every possible choice of the 
sequence of signs of coefficients of $P$ (called sign pattern) 
and for every pair 
$(pos, neg)$ satisfying these conditions there exists a polynomial $P$ 
with exactly 
$pos$ positive and $neg$ negative roots (all of them simple); that is, all these cases are realizable. This is not true for 
$d\geq 4$, yet for $4\leq d\leq 8$ (for these degrees the exhaustive answer to the question of realizability is known)  in all nonrealizable cases either $pos=0$ or $neg=0$. It was conjectured that this 
is the case for any $d\geq 4$. For $d=9$, we show a counterexample to this conjecture: for the sign pattern $(+,-,-,-,-,+,+,+,+,-)$ and the pair $(1,6)$ there exists no polynomial with $1$ positive, $6$ negative simple roots and a complex conjugate pair and, up to equivalence, this is the only case for $d=9$. 

\end{abstract}
\maketitle 

\section{Introduction}

In his work La G\'eom\'etrie published in 1637, Ren\'e Descartes (1596-1650) announces his classical rule of signs which says that for the real polynomial $P(x,a):=x^d+a_{d-1}x^{d-1}+\cdots +a_0$, the number $c$ of sign changes in the sequence of its coefficients serves as an upper bound for the number of its positive roots. When roots are counted 
with multiplicity, then the number of positive roots has the same parity as $c$. One can apply these results to the polynomial $P(-x)$ to obtain an upper bound on the number of negative roots of $P$. For a given $c$, one can find  polynomials $P$ with $c$ sign changes  with exactly $c$, $c-2$, $c-4$, $\ldots$ positive roots. One should observe that by doing so one does not impose any restrictions on the number of negative roots. 

\begin{rem}
{\rm It is mentioned in \cite{AlFu} that 18th century authors used to count roots with multiplicity while omitting 
the parity conclusion; later this conclusion was attributed (see \cite{Ca}) 
to a paper of Gauss 
of 1828 (see \cite{Ga}), although it is absent there, but  
was published by Fourier in 1820 (see p. 294 in \cite{Fo}).}
\end{rem}

In the present paper we consider polynomials $P$ without zero coefficients. We denote by $p$ the number of sign preservations in the sequence of coefficients of $P$, and by $pos_P$ (resp. $neg_P$) the number of 
positive and negative roots of $P$. Thus the following condition must be fulfilled: 

\begin{equation}\label{posneg}
pos_P\leq c~~~,~~~pos_P\equiv c\, (\, {\rm mod~}2)~~~,~~~neg_P\leq p~~~,~~~
neg_P\equiv p\, (\, {\rm mod~}2)~. 
\end{equation}

\begin{defi}
{\rm A {\em sign pattern} is a finite sequence $\sigma$ of $(\pm )$-signs; we assume that the leading sign of $\sigma$ is $+$. For a given sign pattern of length $d+1$ with $c$ sign changes and $p$ sign preservations, 
we call $(c,p)$ its {\em Descartes pair}, $c+p=d$. For a given sign pattern $\sigma$ with Descartes pair $(c,p)$, we call $(pos, neg)$ an {\em admissible pair} for $\sigma$ if conditions (\ref{posneg}), with $pos_P=pos$ and $neg_P=neg$, are satisfied.}
\end{defi}

It is natural to ask the following question: {\em Given a sign pattern $\sigma$ 
of length $d+1$ and an admissible pair $(pos, neg)$ can one find a degree $d$ real monic 
polynomial the signs of whose coefficients define the sign pattern $\sigma$ 
and which has exactly $pos$ simple positive and exactly $neg$ simple negative 
roots ?} When the answer to the question is positive we say that the couple $(\sigma ,(pos, neg))$ is {\em realizable}. 

For $d=1$, $2$ and $3$, the answer to this question is positive, but for $d=4$ D.~J.~Grabiner showed that this is not the case, see \cite{Gr}. Namely, for the sign pattern $\sigma ^*:=(+,+,-,+,+)$ (with Descartes pair $(2,2)$), 
the pair $(2,0)$ is admissible, see (\ref{posneg}), but the couple $(\sigma ^*,(2,0))$ is not realizable. Indeed, for a monic polynomial $P_4:=x^4+a_3x^3+\cdots +a_0$ with signs of the coefficients 
defined by $\sigma ^*$ and having exactly two positive roots $u<v$ one has $a_j>0$ for 
$j\neq 2$, $a_2<0$ and $P_4((u+v)/2)<0$. Hence $P_4(-(u+v)/2)<0$ because $a_j((u+v)/2)^j=a_j(-(u+v)/2)^j$, $j=0$, $2$, $4$ and $0<a_j((u+v)/2)^j=-a_j(-(u+v)/2)^j$, $j=1$,~$3$. As $P_4(0)=a_0>0$, 
there are two negative roots $\xi <-(u+v)/2<\eta$ as well. 

\begin{defi}\label{z2act}
{\rm We define {\em the standard $\mathbb{Z}_2\times \mathbb{Z}_2$-action} on couples of the form (sign pattern, 
admissible pair) by its two generators. Denote by $\sigma (j)$ the $j$th component of the sign pattern $\sigma$. The 
first of the generators replaces the sign pattern $\sigma$ by $\sigma ^r$, where $\sigma ^r$ stands for the {\em reverted} (i.e. read from the back) sign pattern multiplied by $\sigma (1)$, and keeps the same pair $(pos, neg)$. This generator corresponds to the fact that the polynomials $P(x)$ and $x^dP(1/x)/P(0)$ are both monic and have the same numbers of positive and negative roots. The second generator exchanges $pos$ with $neg$ and changes the signs of $\sigma$ corresponding to the monomials of odd (resp. even) powers if $d$ is even (resp. odd); the rest of the signs are preserved. We denote the new sign pattern by $\sigma _m$. This generator corresponds to the fact 
that the roots of the polynomials (both monic) $P(x)$ and $(-1)^dP(-x)$ are mutually opposite, and if $\sigma$ is the sign pattern of $P$, then $\sigma _m$ is the one of $(-1)^dP(-x)$. }
\end{defi}

\begin{rem}
{\rm For a given sign pattern $\sigma$ and an admissible pair $(pos, neg)$, the couples $(\sigma , (pos, neg))$, $(\sigma ^r, (pos, neg))$, $(\sigma _m, (neg, pos))$ 
and $((\sigma _m)^r, (neg, pos))$ are simultaneously realizable or not. One has $(\sigma _m)^r=(\sigma ^r)_m$.}
\end{rem}
Modulo the standard $\mathbb{Z}_2\times \mathbb{Z}_2$-action Grabiner's example is the only nonrealisable couple (sign pattern, admissible pair) for $d=4$. All cases of couples (sign pattern, admissible pair) for $d=5$ and $6$ 
which are not realizable are described in \cite{AlFu}. For $d=7$, this is done in \cite{FoKoSh} and for $d=8$ in \cite{FoKoSh} and \cite{Ko2}. For $d=5$, there is a single nonrealizable case (up to the $\mathbb{Z}_2\times \mathbb{Z}_2$-action). The sign pattern is $(+,+,-,+,-,-,)$ and the admissible pair is $(3,0)$. For $n=6$, $7$ and $8$ there are respectively $4$, $6$, and $19$ nonrealizable cases. In all of them one of the numbers $pos$ or $neg$ is $0$. 
In the present paper we show that for $d=9$ this is not so.  

\begin{nota}\label{sigmam}
{\rm For $d=9$, we denote by $\sigma ^0$ the following sign pattern 
(we give on the first and
third lines below respectively the sign
patterns $\sigma ^0$ and $\sigma ^0_m$ while the line in the middle
indicates the positions of the monomials
of odd powers):}

$$\begin{array}{cccccccccccccccc}
\sigma ^0&=&(&+&-&-&-&-&+&+&+&+&-&)\\ &&&9&&7&&5&&3&&1&&&\\ 
\sigma ^0_m&=&(&+&+&-&+&-&-&+&-&+&+&)\end{array}$$
{\rm In a sense $\sigma ^0$ is centre-antisymmetric -- it consists of 
one plus, five minuses, five pluses and one minus.}
\end{nota}

\begin{tm}\label{maintm}
(1) The sign pattern $\sigma ^0$ is not realizable with the admissible pair~$(1,6)$.

(2) Modulo the standard $\mathbb{Z}_2\times \mathbb{Z}_2$-action, for $d \leq 9$, this is the only nonrealizable couple (sign pattern, admissible pair) in which both components of the admissible pair are nonzero. 
\end{tm}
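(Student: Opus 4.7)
The plan is to prove parts (1) and (2) separately.

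\emph{For part (1)}, I argue by contradiction. Suppose $P$ has sign pattern $\sigma^0$, one simple positive root $\rho$, six simple negative roots $-\mu_1,\ldots,-\mu_6$, and one complex conjugate pair. Then
\[P(x) = (x - \rho)\,(x^2 + bx + c)\,S(x),\qquad S(x) := \prod_{i=1}^{6}(x+\mu_i),\]
with $\rho > 0$, $\mu_i > 0$, $c > 0$, and $b^2 < 4c$ (so the quadratic factor really contributes the forced complex pair). Write $S(x) = \sum_{k=0}^{6} s_k x^k$, where each $s_k > 0$ is an elementary symmetric polynomial in the $\mu_i$. Every coefficient of $P$ is then a bilinear form in the $s_k$ and in $(1,\,b-\rho,\,c-b\rho,\,-c\rho)$, and $\sigma^0$ imposes ten sign inequalities on these forms.

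The key observation is that the two extremal sign conditions $[x^8]P<0$ and $[x^1]P>0$ read
\[\rho > s_5 + b\qquad\text{and}\qquad s_0 c > \rho\,(s_0 b + s_1 c).\]
When $s_0 b + s_1 c > 0$ these combine to $(s_5+b)(s_0 b + s_1 c) < s_0 c$, i.e.
\[b\,(s_0 s_5 + s_1 c) + s_0 b^2 < c\,(s_0 - s_1 s_5).\]
The AM--HM inequality applied to $\mu_1,\ldots,\mu_6$ gives $(\sum \mu_i)(\sum 1/\mu_i) \geq 36$, i.e.\ $s_1 s_5 \geq 36\, s_0$, so the right-hand side is at most $-35\,s_0 c < 0$. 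If $b \geq 0$ the left-hand side is nonnegative, a contradiction. This disposes of the case $b \geq 0$.

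The main obstacle is the residual region $b < 0$ (together with the subcase $s_0 b + s_1 c \leq 0$, in which the pair of extremal inequalities is vacuous). Here one must bring in the interior sign conditions, most naturally $[x^5]P < 0$ and $[x^4]P > 0$ at the `centre' of $\sigma^0$, together with the discriminant bound $b^2 < 4c$ (needed precisely to prevent the quadratic factor from splitting into two real roots and producing extra negative or positive roots). I expect this to require a second, dual combination of sign inequalities---using coefficients near the middle of $P$ rather than at its ends---and a further symmetric-function inequality among the $s_k$; the $r$-invariance of $\sigma^0$ noted in Notation~\ref{sigmam} indicates that an analogous symmetric structure is indeed present on the middle coefficients.

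\emph{For part (2)}, the cases $d \leq 8$ are covered by the classifications in \cite{Gr,AlFu,FoKoSh,Ko2}, in which all nonrealizable couples have $pos\cdot neg = 0$ and hence nothing remains to check. For $d=9$ one enumerates sign patterns of length $10$ modulo the standard $\mathbb{Z}_2 \times \mathbb{Z}_2$-action, and for each orbit and each admissible pair with $pos, neg \geq 1$ either exhibits a realizing polynomial---via concatenation of realizable lower-degree polynomials with compatible sign patterns, and perturbation of polynomials with higher-multiplicity real roots, as in the cited predecessors---or invokes part (1) for the single orbit of $(\sigma^0,(1,6))$. The obstacle here is the breadth of the enumeration rather than any individual hard case; the $\mathbb{Z}_2 \times \mathbb{Z}_2$-symmetry and the restriction $pos, neg \geq 1$ trim the list of orbits considerably.
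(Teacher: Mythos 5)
Your argument for part (1) is complete and correct only in the case $b\geq 0$, i.e.\ when the complex conjugate pair has nonpositive real part; that case is the paper's Lemma~\ref{lmnotexist}, and your derivation via $[x^8]P<0$, $[x^1]P>0$ and the AM--HM bound $s_1s_5\geq 36\,s_0$ is a clean alternative to the paper's factorization argument. But the entire substance of the theorem lies in the region $b<0$, and there your text says only that you ``expect'' a dual combination of interior sign conditions and a further symmetric-function inequality to finish the job. That is not a proof, and there is strong reason to doubt the plan: the paper does \emph{not} close this case by coefficient inequalities at all. Instead it normalizes $a_9=1$, $a_8=-1$, takes the connected component $\Gamma$ of the set of realizing polynomials, proves by a compactness argument (Lemma~\ref{lmKd}) that $a_7$ attains a maximum on $\bar{\Gamma}$, and then shows through a long chain of lemmas (degeneration to hyperbolic polynomials in Lemma~\ref{lm24}, followed by a case analysis on the number of distinct roots using Jacobian rank computations, resultants and computer-algebra root counts in Lemmas~\ref{lmH2}--\ref{lmH6}) that no polynomial of $\bar{\Gamma}$ can be $a_7$-maximal, forcing $\Gamma=\emptyset$. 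The fact that the authors needed this deformation machinery, rather than a finite system of sign inequalities in $(\rho,b,c,s_0,\dots,s_5)$, is evidence that the static approach you propose does not terminate; at the very least you have not exhibited the inequalities that would make it work, so part (1) is unproven.

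For part (2) you correctly dispose of $d\leq 8$ by citation, but for $d=9$ ``enumerate all orbits of sign patterns of length $10$'' is not a workable plan without the reduction that makes the enumeration finite in practice. The paper first shows (Lemma~\ref{lm9s2}) that any couple with $pos\geq 2$ and $neg\geq 2$ is realizable by concatenating a degree-$8$ realization with $x\pm 1$, so one may assume $pos=1$; then (Lemma~\ref{lm9s3}) that the last two signs must be $(+,-)$, so the truncated couple $(\sigma^*,(0,neg))$ must itself be one of the $19$ nonrealizable degree-$8$ couples classified in \cite{Ko2}. Only those $19$ cases and their reverses need to be examined, and each except $C^r$ (which is exactly $(\sigma^0,(1,6))$) is realized by an explicit concatenation with $x^2-x+1$, $x^3-2x^2-3x+10$, or a degree-$5$ factor. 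Without this reduction your enumeration is both unbounded in practice and missing the argument for why the surviving case is precisely the one covered by part (1).
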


\begin{rem}
{\rm It is shown in \cite{Ko11} that for $d=11$, the admissible pair $(1,8)$ is not realizable with the sign pattern (+ - - - - - + + + + + -). Hence Theorem~\ref{maintm} shows an example of a nonrealisable couple, with both components of the admissible pair different from zero, in the least possible degree (namely, 9).}
\end{rem}

Section~\ref{seccomments} contains comments concerning the above result and realizability of sign patterns and admissible pairs in general. Section~\ref{prelim} contains some technical lemmas which allow to simplify the 
proof of Theorem~\ref{maintm}. The plan of the proof of part (1) of Theorem~\ref{maintm} is explained in Section~\ref{method}. The proof results from several lemmas whose proofs can be found in Section~\ref{prlm}. The proof of part (2) of Theorem~\ref{maintm} is given in Section~\ref{secpr2}. 


\section{Comments}\protect\label{seccomments}

It seems that the problem to classify, for any degree $d$, all couples (sign pattern, admissible pair) which are not realizable, is quite difficult. This is confirmed by Theorem~\ref{maintm}. For the moment, only certain sufficient conditions for realizability or nonrealizability have been formulated: 
\begin{itemize}
\item in \cite{FoKoSh} and \cite{KoSh} series of nonrealizable cases were found, for $d \geq 4$, even and for $d \geq 5$, odd respectively;
\item in \cite{FoKoSh} sufficient conditions are given for the nonrealizability of sign patterns with exactly two sign changes.
\item in \cite{YoVlHa} sufficient conditions are given for realizability the nonrealizability of sign patterns with exactly two sign changes.
\end{itemize}
\begin{rem}\label{rem8g1}
{\rm For $d \leq 8$, all couples (sign pattern, admissible pairs) with $pos \geq 1$, $neg \geq 1$, are realizable. That is, in the examples of nonrealizability given in \cite{FoKoSh} and \cite{KoSh} one has either $pos=0$ or $neg=0$, so the question to construct an example of nonrealizability with $pos \neq 0 \neq neg$ was a challenging one}
\end{rem}

The result in \cite{FoKoSh} about sign patterns with exactly two sign changes, consisting of $m$ pluses followed by $n$ minuses followed by $q$ pluses, with $m+n+q=d+1$, is formulated in terms of the following quantity:  

$$\kappa :=\frac{d-m-1}{m}\cdot \frac{d-q-1}{q}~.$$

\begin{lm}\label{lm2changes}
For $\kappa \geq 4$, such a sign pattern is not realizable with the admissible 
pair $(0,d-2)$. The sign pattern is realizable with any admissible pair of 
the form $(2,v)$.
\end{lm}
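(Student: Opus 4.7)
\emph{Part~(1) (nonrealizability).} The plan is to argue by contradiction. Suppose a monic polynomial $P$ with sign pattern $(+^m,-^n,+^q)$ realizes $(0,d-2)$. Since $P$ has $d-2$ simple negative roots and one complex conjugate pair, factor
$$P(x) \;=\; (x^2+\alpha x+\beta)\,\prod_{i=1}^{d-2}(x+r_i),\qquad r_i>0,\;\beta>0,\;\alpha^2<4\beta.$$
Introduce $e_j:=e_j(r_1,\ldots,r_{d-2})$ (with $e_0=1$ and $e_j=0$ outside $[0,d-2]$) and $\phi_k:=e_k+\alpha e_{k-1}+\beta e_{k-2}$, so that the coefficient of $x^{d-k}$ in $P$ equals $\phi_k$. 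The sign pattern is equivalent to $\phi_j>0$ on $[0,m-1]\cup[m+n,d]$ and $\phi_j<0$ on $[m,m+n-1]$. The two endpoint conditions of the middle block,
$$\alpha\;<\;-\frac{e_m+\beta e_{m-2}}{e_{m-1}}\quad\text{and}\quad \alpha\;<\;-\frac{e_{m+n-1}+\beta e_{m+n-3}}{e_{m+n-2}},$$
force $\alpha<0$. Squaring each and combining with $\alpha^2<4\beta$ produces two quadratic inequalities in $\beta$ whose simultaneous satisfiability reduces, after clearing denominators, to an inequality among the $e_k$'s that couples the indices $\{m-2,m-1,m\}$ and $\{m+n-3,m+n-2,m+n-1\}$.

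\emph{Key step and main obstacle.} To contradict this coupled $e_k$-inequality, the plan is to apply Newton--Maclaurin log-concavity of the $e_k$ --- equivalently the iterated AM--HM inequality $\bigl(\sum r_i\bigr)\bigl(\sum 1/r_i\bigr)\geq (d-2)^2$ together with its analogues for higher $e_k$-ratios across an arithmetic window of indices. In the symmetric case $m=q=1$ this reduces immediately to $(d-2)^2=\kappa\geq 4$, giving the contradiction; in the general case one must carry out the telescoping so that the numerical factor produced is exactly $\kappa=(d-m-1)(d-q-1)/(mq)$ rather than a looser combinatorial quantity. This bookkeeping --- selecting the correct cross-ratio of $e_k$'s and tracking the direction of each log-concavity inequality through the chain --- will be the principal technical obstacle. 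Once the clean bound $\kappa<4$ emerges from the chain, the hypothesis $\kappa\geq 4$ closes the contradiction.

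\emph{Part~(2) (realizability of $(2,v)$).} For each admissible $v$, the plan is to exhibit a polynomial of the form
$$P(x) \;=\; (x-u)(x-w)\prod_{i=1}^v (x+r_i)\prod_{j=1}^{(d-2-v)/2}\bigl(x^2+\gamma_j x+\delta_j\bigr),$$
with $0<u<w$, $r_i>0$, $\delta_j>0$, $\gamma_j^2<4\delta_j$. Since the parameter-to-coefficient map is analytic and each sign-pattern cell is open in coefficient space, it suffices to land inside $(+^m,-^n,+^q)$ at a single parameter value. To that end, introduce a scaling parameter $\varepsilon$, taking $u,w$ of order $1$, $r_i=\varepsilon\rho_i$ with distinct $\rho_i>0$, $\gamma_j=0$, and $\delta_j=\varepsilon^2\tau_j^2$ with distinct $\tau_j>0$; for $\varepsilon$ small, the coefficients of $P$ are dominated by explicit homogeneous-in-$\varepsilon$ monomials in the remaining parameters, whose signs can be forced to match $(+^m,-^n,+^q)$ by adjusting $u,w,\rho_i,\tau_j$. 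No obstruction of the type found in part~(1) appears, because the two positive real roots contribute a negative sum $-(u+w)$ to the $x^{d-1}$-coefficient, supplying exactly the sign freedom needed to install the middle block of $n$ minus signs.
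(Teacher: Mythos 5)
The paper does not prove this lemma at all: it is quoted as Proposition~6 of \cite{FoKoSh}, so your attempt must stand on its own. For part~(1) your reduction is sound as far as it goes: writing $P=(x^2+\alpha x+\beta)\prod(x+r_i)$, the two endpoint conditions of the negative block do force $\alpha<0$, and in the case $m=q=1$ the argument genuinely closes (from $\alpha<-e_1$ and $\alpha<-\beta e_{d-3}/e_{d-2}$ together with $\alpha^2<4\beta$ one gets $e_1e_{d-3}/e_{d-2}<4$, contradicting AM--HM when $\kappa=(d-2)^2\geq 4$). But for general $m,q$ the step you label ``bookkeeping'' is exactly where the content of the lemma lives, and it is not done. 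Note that applying AM--GM to a single endpoint, $(e_m+\beta e_{m-2})^2\geq 4\beta e_me_{m-2}$, combined with $\alpha^2<4\beta$ only yields $e_{m-1}^2>e_me_{m-2}$, which is weaker than Newton's inequality and gives no contradiction; to get anything one must couple the two endpoints through a chain of Maclaurin-type inequalities spanning the indices from $m-2$ to $m+n-1$, and it is neither carried out nor evident that the threshold emerging from that chain is exactly $\kappa\geq 4$ rather than some looser constant. As written, part~(1) is a plan with the decisive inequality missing.

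Part~(2) contains a concrete error. With $u,w$ of order $1$, $r_i=\varepsilon\rho_i$ and $\delta_j=\varepsilon^2\tau_j^2$, the polynomial degenerates to $(x-u)(x-w)x^{d-2}$ as $\varepsilon\to 0$, so the coefficient of $x^{d-1}$ is $-(u+w)+O(\varepsilon)<0$. The sign pattern produced therefore has its first sign change already at the second position, which is compatible with $(+^m,-^n,+^q)$ only when $m=1$; for $m\geq 2$ the construction cannot produce the leading block of $m$ pluses, and no choice of $u,w,\rho_i,\tau_j$ repairs this since the offending term is order $1$ while all corrections are $O(\varepsilon)$. The standard way to realize $(2,v)$ for all admissible $v$ and all $m,n,q$ is by concatenation in the sense of Lemma~\ref{lmconcat}: glue a polynomial carrying the leading $+^m$ block (with the appropriate number of negative roots) to a polynomial carrying the two positive roots and the remaining negative roots at a much smaller scale $x/\varepsilon$. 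You should either restrict your scaling argument to $m=1$ and handle the rest by reversal and concatenation, or replace it by the concatenation argument outright.
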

Lemma~\ref{lm2changes} coincides with Proposition~6 of \cite{FoKoSh}. One can construct new realizable cases with the help of the following concatenation lemma (see its proof in \cite{FoKoSh}):

\begin{lm}\label{lmconcat}
Suppose that the
monic polynomials $P_j$ of degrees $d_j$ and with sign 
patterns of the form $(+,\sigma _j)$, $j=1$, $2$ (where $\sigma _j$ contains 
the last $d_j$ components of the corresponding sign pattern) realize the 
pairs $(pos_j, neg_j)$. Then 

(1) if the last position of $\sigma _1$ is $+$, then for any $\varepsilon >0$ 
small enough, the polynomial $\varepsilon ^{d_2}P_1(x)P_2(x/\varepsilon )$ 
realizes the sign pattern $(+,\sigma _1,\sigma _2)$ and the pair 
$(pos_1+pos_2,neg_1+neg_2)$;

(2) if the last position of $\sigma _1$ is $-$, then for any $\varepsilon >0$ 
small enough, the polynomial $\varepsilon ^{d_2}P_1(x)P_2(x/\varepsilon )$ 
realizes the sign pattern $(+,\sigma _1,-\sigma _2)$ and the pair 
$(pos_1+pos_2,neg_1+neg_2)$ (here $-\sigma _2$ is obtained from $\sigma _2$ 
by changing each $+$ by $-$ and vice versa).
\end{lm}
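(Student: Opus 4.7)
The plan is to separate the claim into a root count and a sign-pattern computation, both of which flow from the scaling $x \mapsto x/\varepsilon$: its effect is to scale every root of $P_2$ by $\varepsilon$, and after multiplication by $\varepsilon^{d_2}$ the $j$th coefficient of $P_2$ becomes $b_j\,\varepsilon^{d_2-j}$, so low-degree coefficients are strongly contracted as $\varepsilon \to 0^+$.

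\emph{Real roots.} First I would observe that $\varepsilon^{d_2} P_2(x/\varepsilon)$ is monic of degree $d_2$ with roots $\varepsilon\cdot(\text{roots of } P_2)$. Multiplication by $\varepsilon > 0$ preserves the sign of every real number, so this scaled polynomial realizes the same pair $(pos_2, neg_2)$. For $\varepsilon$ small enough, its real roots cluster near $0$ and are therefore disjoint from the finitely many roots of $P_1$; hence $R_\varepsilon(x) := \varepsilon^{d_2} P_1(x) P_2(x/\varepsilon)$ has exactly $pos_1 + pos_2$ simple positive and $neg_1 + neg_2$ simple negative roots.

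\emph{Signs of coefficients.} Writing $P_1(x) = \sum_{i=0}^{d_1} a_i x^i$ and $P_2(x) = \sum_{j=0}^{d_2} b_j x^j$ with $a_{d_1} = b_{d_2} = 1$ and every other coefficient nonzero, the coefficient of $x^k$ in $R_\varepsilon$ is
$$c_k(\varepsilon) \;=\; \sum_{i+j=k} a_i\, b_j\, \varepsilon^{d_2-j}.$$
The dominant monomial as $\varepsilon \to 0^+$ maximizes $j$. For $k \geq d_2$ this is $j = d_2$, giving $c_k(\varepsilon) = a_{k-d_2} + O(\varepsilon)$; for $k < d_2$ the largest feasible $j$ is $k$ (forced by $i = k-j \geq 0$), giving $c_k(\varepsilon) = a_0\, b_k\, \varepsilon^{d_2-k} + O(\varepsilon^{d_2-k+1})$. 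Each dominant term is nonzero by hypothesis, and since there are only $d_1+d_2+1$ coefficients, one can choose a single $\varepsilon_0 > 0$ so that for all $\varepsilon \in (0, \varepsilon_0)$ the sign of every $c_k(\varepsilon)$ equals that of its dominant monomial.

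\emph{Assembly, and the only point of care.} Reading positions $k = d_1+d_2, \ldots, 0$, the resulting signs are $\mathrm{sgn}(a_{d_1}), \ldots, \mathrm{sgn}(a_0)$ — reproducing $(+,\sigma_1)$ — followed by $\mathrm{sgn}(a_0)\cdot\mathrm{sgn}(b_{d_2-1}), \ldots, \mathrm{sgn}(a_0)\cdot\mathrm{sgn}(b_0)$, that is, $\mathrm{sgn}(a_0)\cdot\sigma_2$. The last entry of $\sigma_1$ is precisely $\mathrm{sgn}(a_0)$: it is $+$ in case (1), yielding the pattern $(+,\sigma_1,\sigma_2)$, and $-$ in case (2), yielding $(+,\sigma_1,-\sigma_2)$. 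The only genuine obstacle in the argument is ensuring that a single $\varepsilon_0$ works simultaneously for all coefficient asymptotics and avoids any collision between a root of $P_1$ and an $\varepsilon$-scaled root of $P_2$ — but both requirements are open conditions on a finite list, so a common admissible interval $(0,\varepsilon_0)$ exists by routine compactness.
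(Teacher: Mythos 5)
Your proof is correct: the scaling argument (roots of $\varepsilon^{d_2}P_2(x/\varepsilon)$ are the $\varepsilon$-scaled roots of $P_2$, plus the dominant-monomial analysis of $c_k(\varepsilon)=\sum_{i+j=k}a_ib_j\varepsilon^{d_2-j}$) is exactly the standard proof of this concatenation lemma. The paper itself does not reproduce a proof but defers to \cite{FoKoSh}, where the argument is the same as yours, so there is nothing to object to.
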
 

\begin{rem}
{\rm If Lemma~\ref{lmconcat} were applicable to the case treated in Theorem~\ref{maintm}, then this case would be realizable and Theorem~\ref{maintm} would be false. We show here that Lemma~\ref{lmconcat} is indeed inapplicable. It suffices to check the cases $\deg P_1\geq 5$, $\deg P_2\leq 4$ due to the centre-antisymmetry of $\sigma ^0$ and the possibility to use the $\mathbb{Z}_2\times \mathbb{Z}_2$-action. In all these cases the sign pattern of the polynomial $P_1$ has exactly two sign changes (including the first sign $+$, the four minuses that follow and the next between 
one and four pluses). With the notation from Lemma~\ref{lm2changes}, these cases are $m=1$, $n=4$, $q=1$, $\ldots$, $4$. The respective values of $\kappa$ are $9$, $6$, $5$ and $9/2$. All of them are $> 4$. By Descartes' rule the polynomial $P_1$ can have either $0$ or $2$ positive roots. In the case of $2$ positive roots, Lemma~\ref{lmconcat} implies that its concatenation with $P_2$ has at least $2$ positive roots which is a contradiction. Hence $P_1$ has no positive roots. The polynomials $P_1$ and $P_2$ define sign patterns with $3+q-1$ and $4-q$ sign preservations respectively. The polynomial $P_1$ has $\leq 1+(q-1)$ negative roots (see Lemma~\ref{lm2changes}) and $P_2$ has $\leq 4-q$ ones. Therefore he concatenation of $P_1$ and $P_2$ has $\leq 6$ negative roots and a polynomial realizing the couple $(\sigma ^0,(1,6))$ (if any) could not be represented as a concatenation of $P_1$ and $P_2$. This, of course, does not a priori mean that such a polynomial does not exist.}
\end{rem}

\section{Preliminaries\protect\label{prelim}}

\begin{nota} 
{\rm By $S$ we denote the set of tuples $a \in \mathbb{R}^{9}$ for which the polynomial $P(x,a)=x^{9}+a_{8}x^{8}+\cdots +a_0$ realizes the pair $(1,6)$ and the signs of its coefficients define the sign pattern $\sigma_0$. 

We denote by $T$ the subset of $S$ for which $a_{8}=-1$. For a 
polynomial $P \in S$, the conditions $a_{9}=1$, $a_{8}=-1$ can be obtained by 
rescaling the variable $x$ and by multiplying $P$ by a nonzero constant ($a_{9}$ is the leading coefficient of $P$).}
\end{nota}

\begin{lm}\label{lmnot0}
For $a\in \bar{S}$, one has $a_j\neq 0$ for $j=7,6,3,2$, and one 
does not have
$a_4=0$ and $a_5=0$ simultaneously.
\end{lm}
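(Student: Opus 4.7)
The plan is to argue by contradiction in each of the five assertions of the lemma, using Descartes' rule of signs and, in the borderline cases, an analysis of the roots to leading order. First I will exploit the centre-antisymmetry of $\sigma^0$: the involution $P(x) \mapsto x^9 P(1/x)/a_0$ (a generator of the $\mathbb{Z}_2 \times \mathbb{Z}_2$-action of Definition~\ref{z2act}) preserves the couple $(\sigma^0, (1,6))$ and sends $a_j$ to a nonzero multiple of $a_{9-j}$. This reduces the cases $a_2 = 0$ and $a_3 = 0$ to $a_7 = 0$ and $a_6 = 0$ respectively, so only three cases require detailed analysis: $a_7 = 0$, $a_6 = 0$, and $a_4 = a_5 = 0$.

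For $a \in \bar{S}$, denote by $N$ the number of negative roots of $P(x, a)$ counted with multiplicity, and by $Z$ the multiplicity of $0$ as a root. Since the $6$ negative roots of polynomials in $S$ limit to nonpositive values, $N + Z \geq 6$. Consider first $a_7 = 0$: the sign sequence of $-P(-x)$ loses two sign changes when the coefficient of $x^7$ vanishes, dropping from $6$ to $4$, so Descartes gives $N \leq 4$. Combined with $N + Z \geq 6$ this forces $Z \geq 2$, hence $a_0 = a_1 = 0$. Applying Descartes to the quotient $P/x^2$ tightens the bound to $N \leq 3$, forcing $Z \geq 3$; iterating, I obtain $Z \geq 5$, and therefore $P(x, a) = x^5 R(x)$ for a quartic $R$ whose coefficients satisfy $a_5 < 0$, $a_6 < 0$, $a_7 = 0$, $a_8 < 0$ and which has exactly $1$ positive, $1$ negative, and $2$ non-real complex roots (this being forced by Descartes applied to $R$ itself).

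To exclude this reduced configuration I turn to an analytic argument. Write $a_4 = -e_5(r_1, \ldots, r_9)$ as the fifth elementary symmetric function of the nine roots. Along any sequence $a^{(n)} \in S$ approximating $a$, five roots $-\epsilon_i^{(n)}$ tend to $0$ from the negative side while the other four tend to the roots $\alpha > 0$, $\beta < 0$, $\gamma$, $\bar{\gamma}$ of $R$. Expanding to leading order in $\epsilon$ gives $e_5 \approx -e_1(\epsilon) \cdot \alpha \beta |\gamma|^2$. Since $\alpha \beta < 0$ and $|\gamma|^2 > 0$, this leading term is positive, so $a_4^{(n)} = -e_5 < 0$ for $n$ large. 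But $a^{(n)} \in S$ requires $a_4^{(n)} > 0$, giving the desired contradiction.

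The cases $a_6 = 0$ and $a_4 = a_5 = 0$ follow the same pattern: cascading Descartes reduces each to a factorization $P(x, a) = x^k R(x)$ with $R$ of specified degree and root configuration, and an analogous leading-order expansion of $-e_5$ of the full root set of $P$ yields $a_4 < 0$, incompatible with $\sigma^0$. The main obstacle throughout will be this final analytic step: Descartes' rule becomes exactly tight in the borderline cases, so I must compute the relevant symmetric function of the roots explicitly and use the sign of the product $\alpha\beta|\gamma|^2$ determined by the root structure of $R$ to force the sign violation.
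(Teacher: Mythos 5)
Your proposal is built on a sound instinct but is far more elaborate than the paper's argument, and in the places where the extra machinery is actually needed it has concrete gaps. The paper's proof is essentially one line: if $a_j=0$ for $j\in\{7,6,3,2\}$ (or if $a_4=a_5=0$), the number of sign changes in $\sigma^0_m$ drops from $6$ to $4$, so by Descartes' rule $P(\cdot,a)$ has fewer than $6$ negative roots counted with multiplicity, contradicting the fact that the six negative roots of nearby polynomials of $S$ persist in the limit. Everything beyond that single count in your write-up --- the cascade of vanishing trailing coefficients, the factorization $P=x^kR$, the asymptotics of $e_5$ --- is aimed at the one scenario the bare count does not immediately kill, namely roots of the approximating polynomials migrating to the origin (i.e. $a_0=0$). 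The paper quarantines that scenario into the separate Lemma~\ref{lmnot0bis}, whose proof uses Lemma~\ref{lm2changes} rather than symmetric-function asymptotics; you have in effect tried to absorb that second lemma into this one.

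Within your own framework the following points would fail or are missing. First, the reduction of $a_2=0$ and $a_3=0$ to $a_7=0$ and $a_6=0$ uses the map $P\mapsto x^9P(1/x)/a_0$, which divides by $a_0$; it is therefore unavailable precisely when $a_0=0$, which is the only regime in which your cascade and analytic step are needed (when $a_0\neq 0$ the plain Descartes count already finishes all five cases). A direct cascade for $a_2=0$ with $a_0=0$ stalls at $N\leq 4$, $Z\geq 3$ and does not reach the configuration $P=x^5R$ you analyze. Second, your cascade's terminal configurations are not all identified: in the case $a_4=a_5=0$ the cascade forces $a_0=\cdots=a_5=0$, i.e. $Z\geq 6$ and $R$ cubic with no negative root, so the leading term of $e_5$ is $e_2(\epsilon)\,\alpha|\gamma|^2$ rather than $-e_1(\epsilon)\,\alpha\beta|\gamma|^2$; the sign conclusion survives only if $\alpha>0$ strictly and $\gamma\neq 0$, and these degenerations (the positive root or the limit of the complex pair also reaching $0$, or the pair coalescing into a double real root as permitted by Remark~\ref{rem1or3}) are not ruled out. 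The asymptotic idea itself is fine --- the one-$\epsilon$ (or two-$\epsilon$) terms of $e_5$ do dominate since all $\epsilon_i>0$ --- but as written the proof covers only one branch of a case tree whose remaining branches require genuinely different computations.
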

\begin{proof}[Proof of Lemma~\ref{lmnot0}]
For $a_j=0$ (where $j$ is one of the
indices $7,6,3,2$) there are less than $6$ sign changes in the 
sign pattern
$\sigma ^0_m$. Descartes' rule of signs implies that the polynomial $P(.,a)$ 
has less than
$6$ negative roots counted with multiplicity. The same is true for $a_5=a_4=0$. 
\end{proof}

\begin{lm}\label{lmnot0bis}
For $a\in \bar{S}$, one has $a_0\neq 0$.
\end{lm}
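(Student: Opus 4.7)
The plan is to argue by contradiction. Suppose $a\in\bar S$ with $a_0=0$, and pick a sequence $a^{(n)}\in S$ with $a^{(n)}\to a$. Then the limit polynomial factors as $P(x,a)=x^k Q(x)$ with $Q(0)\neq 0$, where $k\geq 1$ is the multiplicity of $0$ as a root; since $a_0=\cdots=a_{k-1}=0$ and Lemma~\ref{lmnot0} forces $a_2\neq 0$, we have $k\in\{1,2\}$.

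Next I would track how the nine simple roots of $P_n := P(\cdot,a^{(n)})$ (one positive, six negative, one conjugate complex pair) migrate in the limit. Let $p_0\in\{0,1\}$, $n_0\in\{0,\ldots,6\}$, $c_0\in\{0,2\}$ denote the numbers (with multiplicity) of positive, negative and non-real roots of $P_n$ that tend to $0$; then $k=p_0+n_0+c_0$, and $Q$, of degree $9-k$, has exactly $1-p_0$ positive, $6-n_0$ negative and $2-c_0$ non-real roots (counted with multiplicity). Its leading coefficients $(1,a_8,\ldots,a_k)$ follow the initial segment of $\sigma^0$, except that $a_5$ or $a_4$ may vanish (but not both, again by Lemma~\ref{lmnot0}); in particular the nonzero coefficients of $Q$ still exhibit exactly two sign changes.

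The heart of the proof is then a short case analysis on the admissible triples $(p_0,n_0,c_0)$. For $k=1$: the case $(0,1,0)$ gives $pos_Q=1$, odd, violating Descartes' parity rule (two sign changes require $pos_Q$ even); the case $(1,0,0)$ gives $Q$ of degree $8$ realizing $(0,6)$ with sign pattern $(+,-,-,-,-,+,+,+,+)$, where Lemma~\ref{lm2changes} applied with $m=1$, $n=4$, $q=4$ gives $\kappa = 6\cdot 3/4 = 9/2\geq 4$, ruling out realizability. For $k=2$ the triples are $(0,0,2)$, giving $neg_Q=6$ but only $5$ sign preservations in $Q$ (contradicting Descartes); $(0,2,0)$, giving $pos_Q=1$ odd (Descartes again); and $(1,1,0)$, giving $Q$ of degree $7$ realizing $(0,5)$ with sign pattern $(+,-,-,-,-,+,+,+)$, where Lemma~\ref{lm2changes} with $m=1$, $n=4$, $q=3$ yields $\kappa = 5\geq 4$, again ruling it out.

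The main obstacle is the (minor) technical point that Lemma~\ref{lm2changes} is stated for strict sign patterns with no zero coefficients and for realizability by simple real roots, whereas in our setting $Q$ may have a vanishing $a_5$ or $a_4$ and possibly multiple negative roots inherited from the limit. I would resolve this by a standard perturbation argument: slightly nudging any vanishing middle coefficient to its prescribed sign in $\sigma^0$, and splitting any multiple negative roots into simple negative ones via a generic perturbation, produces a polynomial with the strict sign pattern realizing $(0,6)$ (respectively $(0,5)$) by simple roots, directly contradicting Lemma~\ref{lm2changes}.
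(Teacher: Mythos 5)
Your overall strategy is the same as the paper's: write $P=x^kQ$ with $k\in\{1,2\}$ (using Lemma~\ref{lmnot0} to bound $k$), observe that $Q$ keeps a two-sign-change pattern with $\kappa=9/2$ (resp.\ $\kappa=5$), and derive a contradiction from Descartes' rule and Lemma~\ref{lm2changes} after cleaning up multiple roots and vanishing coefficients by perturbation. However, there is a genuine gap in the bookkeeping step ``$Q$ has exactly $2-c_0$ non-real roots''. A limit of polynomials in $S$ can turn the complex conjugate pair into a \emph{double positive root} away from the origin (this is exactly the degeneration discussed in Remark~\ref{rem1or3}); it cannot become a double negative root (not enough sign preservations), but nothing prevents it from becoming real and positive while $a_0\to 0$. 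In your cases $(p_0,n_0,c_0)=(1,0,0)$ and $(1,1,0)$ this means $Q$ may realize the pair $(2,6)$, resp.\ $(2,5)$, instead of $(0,6)$, resp.\ $(0,5)$ --- and then Lemma~\ref{lm2changes} gives no contradiction at all: its second sentence says precisely that such sign patterns \emph{are} realizable with any pair of the form $(2,v)$. The parity cases $(0,1,0)$ and $(0,2,0)$ are unaffected ($pos_Q$ stays odd whether the pair becomes real or not), as is $(0,0,2)$.

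The paper closes exactly this loophole: when $P/x$ has a double positive root $g$, it first adds $\pm\epsilon x$ (sign chosen according to whether $g$ is a local minimum or maximum) to push the double root back into a complex conjugate pair without changing the sign pattern, and only then invokes Lemma~\ref{lm2changes}; you would need to insert this step in your two problematic cases. A second, smaller point: your final ``generic perturbation'' splitting a multiple negative root must in fact be chosen rather non-generically, since a generic coefficient perturbation can split a double negative root into a complex pair. The paper perturbs at the level of roots, via $(x+b+\epsilon)P_1/(x+b)$, and, when the condition $a_4=0$ or $a_5=0$ must be preserved during the splitting, uses the more elaborate perturbation $P_1+\epsilon(x-b)^{m-1}(x+h_1)(x+h_2)Q_1$ with $h_1,h_2$ tuned so that the vanishing coefficient stays zero, nudging that coefficient to its prescribed sign only after all roots are simple. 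Your order of operations can likely be repaired, but as written it is not automatic.
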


\begin{rem}\label{rem1or3}
{\rm A priori the set $\bar{S}$ can contain polynomials with all roots real and 
nonzero. The positive ones can be either a triple root or a double 
and a simple roots (but not three simple roots). If $a\in S$, then $P(x,a)$ has the maximal possible number of negative roots (equal to the number of sign preservations in the sign pattern). If $a^{'}\in \bar{S}$, then the polynomial $Q(x,a^{'})$ is the limit of polynomials $Q(x,a)$ with $a\in S$. In the limit as a $\rightarrow a^{'}$, the complex conjugate pair can become a double positive, but not a double negative root, because there are no 8 sign preservations in the sign pattern.}
\end{rem}

\begin{proof}[Proof of Lemma~\ref{lmnot0bis}]
Suppose that for $P \in \bar{S}$, one has $a_0=0$ and for $j \neq 0$, $a_j \neq 0$. Hence the polynomial $P_1:= P/x$ has 6 negative roots and either 0 or 2 positive roots. We show that 0 positive roots is impossible. Indeed, the polynomial  $P_1$ defines a sign pattern with exactly 2 sign changes. Suppose that all negative roots are distinct. If $P_1$ has no positive roots, then one can apply Lemma~\ref{lm2changes}, according to which, as one has $\kappa =  9/2 > 4$, such a polynomial does not exist. If $P_1$ has a negative root $-b$ of multiplicity $m>1$, then its perturbation 

$$P_{1,\epsilon}:=(x+b+ \epsilon) P_1/(x+b)~,~0 < \epsilon \ll 1~,$$ 
defines the same sign pattern and instead of the root $-b$ of multiplicity $m$ has a root $-b$ of multiplicity $m-1$ and a simple root $b- \epsilon$. After finitely many such perturbations,  one is in the case when all negative roots are distinct. 

If $P_1$ has 2 positive roots, then this is a double positive root $g$, see Remark~\ref{rem1or3}. 
In this case, we add to $P_1$ a linear term $\pm \epsilon x $ (with $\epsilon$ small enough in order not to change the sign pattern) to make the double root bifurcate into a complex conjugate pair. The sign is chosen depending on whether $P_1$ has a minimum or a maximum at $g$.  
After this, if there are multiple negative roots, we apply perturbations of the form $P_{1,\epsilon}$. 

Suppose that $a_1=a_0=0$, and that for $j \geq 2$, $a_j \neq 0$. Then one considers the polynomial $P_2:=P/x^2$. It defines a sign pattern with two sign changes and one has $\kappa = 5 > 4$. Hence it has 2 positive roots, otherwise one obtains a contradiction with Lemma~\ref{lm2changes}.  

Suppose now that exactly one of the coefficients $a_4$ or $a_5$ is 0. We assume this to be $a_4$, for $a_5$ the reasoning is similar. Suppose also that either $a_1 \neq 0$, $a_0 = 0$ or $a_1 = a_0 = 0$, and that for $j \geq 2$, $j \neq 4$, one has $a_j \neq 0$. We treat in detail the case $a_1 \neq 0$, $a_0 = 0$, the case $a_1 = a_0 = 0$ is treated by analogy. We first make the double positive root if any bifurcate into a complex conjugate pair as above. This does not change the coefficient $a_4$. After this instead of perturbations $P_{1,\epsilon}$ we use perturbations preserving the condition $a_4 = 0$.
Suppose that $P_1=(x-b)^mQ_1Q_2$, where $Q_1$ and $Q_2$ are monic polynomials, deg $Q_2 = 2$, $Q_2$ having a complex conjugate pair of roots, $Q_1$ having $6-m$ negative roots counted with multiplicity. Then we set: 

$$ P_1 \mapsto P_1+ \epsilon (x-b)^{m-1} (x+h_1)(x+h_2)Q_1~, $$ 
where the real numbers $h_i$ are distinct, different from any of the roots of $P$ and chosen in such a way that the coefficient $\delta$ of $x^3$ of $P_1$ is 0.
Such a choice is possible, because all coefficients of the polynomial $(x+b)^{m-1}Q_1$ are positive, hence $\delta$ is of the form $A+(h_1+h_2)B+Ch_1h_2$, where $A>0$, $B>0$ and $C>0$. The result of the perturbation is a polynomial $P_1$ having six negative distinct roots and a complex conjugate pair; its coefficient of $x^3$ is 0. By adding a small positive number to this coefficient, one obtains a polynomial $P_1$ with roots as before and defining the sign pattern $(+ - - - - + + + +)$. For this polynomial one has $\kappa = 9/2 > 4$ which contradicts Lemma~\ref{lm2changes}.  

In the case $a_1=a_0=0$, the polynomial $P_1$ thus obtained has five negative distinct roots, a complex conjugate pair and a root at 0. One adds small positive numbers to its constant term and to its coefficient of $x^3$ and one proves in the same way that such a polynomial does not exist. 
\end{proof}

\begin{rem}\label{rem2lem}
{\rm One deduces from Lemmas~\ref{lmnot0} and \ref{lmnot0bis} that for a polynomial in $\bar{T}$ exactly one of the following conditions holds true: 

(1) all its coefficients are nonvanishing;

(2) exactly one of them is vanishing and this coefficient is either $a_1$ or $a_4$ or $a_5$; 

(3) exactly two of them are vanishing, and these are either $a_1$ and $a_4$ or $a_1$ and $a_5$.

 }
\end{rem}

\begin{lm}\label{lmnotexist}
There exists no real degree 9 polynomial satisfying the following conditions: 
\begin{itemize}
\item the signs of its coefficients define the sign pattern $\sigma^0$,
\item it has a complex conjugate pair with nonpositive real part,
\item it has a single positive root,
\item it has negative roots of total multiplicity 6.
\end{itemize}
\end{lm}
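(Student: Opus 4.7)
The plan is a proof by contradiction. Suppose such a polynomial $P$ exists; write $P(x) = (x-\alpha)R(x)$, where $\alpha > 0$ is the positive root and $R(x) = (x^2 + \epsilon x + w)\prod_{i=1}^{6}(x+\beta_i)$ collects the remaining factors, with $\beta_i > 0$ (listed according to multiplicity), $\epsilon := -2u \ge 0$ (this is exactly where the hypothesis $u \le 0$ enters), and $w := u^2 + v^2 > 0$. Each of the three displayed factors of $R$ has nonnegative coefficients together with strictly positive leading and constant coefficients, so every coefficient $r_0,\ldots,r_8$ of $R$ is strictly positive.

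The next step extracts just two (and only two) of the ten sign constraints coming from $\sigma^0$. From the identity $a_k = r_{k-1} - \alpha r_k$ (with $r_{-1} = 0$ and $r_8 = 1$), the required signs $a_8 < 0$ and $a_1 > 0$ become $\alpha > r_7$ and $\alpha r_1 < r_0$, respectively, so joint consistency forces the single inequality $r_1 r_7 < r_0$. I would then expand the three relevant coefficients of $R$ in terms of $\pi := \prod \beta_i$ and the elementary symmetric functions $\sigma_k := e_k(\beta_1,\ldots,\beta_6)$: namely $r_0 = \pi w$, $r_1 = w\sigma_5 + \epsilon \pi$, and $r_7 = \sigma_1 + \epsilon$. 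Discarding the nonnegative $\epsilon$-terms gives $r_1 r_7 \ge w\sigma_1\sigma_5$.

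The final ingredient is the elementary inequality $\sigma_1\sigma_5 \ge 36 \pi$. This follows at once from the identity $\sigma_5 = \pi \sum_{i=1}^{6} \beta_i^{-1}$ together with the Cauchy-Schwarz (or AM-HM) inequality $\left(\sum \beta_i\right)\left(\sum \beta_i^{-1}\right) \ge 36$. Substituting, one obtains $r_1 r_7 \ge 36 \pi w = 36 r_0 > r_0$, flatly contradicting the requirement $r_1 r_7 < r_0$.

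Rather than a genuine obstacle, the main conceptual content of the argument is the observation that one need not exploit all ten sign constraints from $\sigma^0$: just the two ``outer'' ones on $a_8$ and $a_1$, combined with the hypothesis $u \le 0$ (which is precisely what renders all the $r_k$ positive), suffice to force the contradiction via a single classical symmetric-function inequality.
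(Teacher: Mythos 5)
Your proof is correct, and it takes a genuinely different route from the paper's. The paper groups the positive root with the six negative ones: the degree\nobreakdash-$7$ factor $P_1P_2$ is hyperbolic with exactly one sign change, the condition $a_8<0$ together with the nonnegativity of the linear coefficient of the quadratic forces that sign change to occur at the top (so $\gamma_0,\dots,\gamma_6<0$), and multiplying back by the quadratic then yields $a_2,a_3,a_4<0$, contradicting three of the prescribed signs at once. You instead group the quadratic with the six negative roots, so that the degree\nobreakdash-$8$ cofactor $R$ of the positive root $\alpha$ has all coefficients positive, and you use only the two constraints $a_8<0$ and $a_1>0$ to trap $\alpha$ in the interval $(r_7,\,r_0/r_1)$, which the inequality $\bigl(\sum\beta_i\bigr)\bigl(\sum\beta_i^{-1}\bigr)\ge 36$ shows to be empty. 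Both arguments exploit the nonpositive real part in the same way (it makes the linear coefficient of the quadratic factor nonnegative), and both are elementary; yours uses fewer of the ten sign constraints (two rather than four) and gives a quantitative margin (a factor of $36$ where a factor of $1$ would do), while the paper's is softer, resting on the one-sign-change property of hyperbolic polynomials rather than on a symmetric-function inequality. One cosmetic remark: the blanket justification that a product of factors with nonnegative coefficients and positive leading and constant terms has all coefficients positive is false in general (e.g.\ $(x^2+1)^2$); here it is saved by the fact that $\prod_{i}(x+\beta_i)$ has \emph{all} coefficients strictly positive, and the identity $r_k=d_{k-2}+\epsilon d_{k-1}+wd_k$ makes the positivity of every $r_k$ immediate, so you may want to say that instead.
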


\begin{proof}
Suppose that such a monic polynomial $P$ exists. We can write $P$ in the form 
$P=P_1P_2P_3$, where $\deg P_1=6$. 

All roots of $P_1$ are negative hence 
$P_1=\sum _{j=0}^6\alpha _jx^j$, $\alpha _j>0$, $\alpha _6=1$; $P_2=x-w$, $w>0$; 
$P_3=x^2+\beta _1x+\beta _0$, $\beta _j\geq 0$, $\beta _1^2-4\beta _0<0$. 

By Descartes' rule of signs, the polynomial $P_1P_2=\sum _{j=0}^7\gamma _jx^j$, 
$\gamma _7=1$,  
has exactly one sign 
change in the sequence of its coefficients. It is clear that 
as $0>a_{8}=\gamma _6+\beta _1$, and as $\beta _1\geq 0$, one must have 
$\gamma _6<0$. But then $\gamma _j<0$ for $j=0$, $\ldots$, $6$. For 
$j=2$, 3 and $4$, one has 
$a_j=\gamma _{j-2}+\beta _1\gamma _{j-1}+\beta _0\gamma _j<0$ which means that 
the signs of $a_j$ do not define the sign pattern $\sigma ^0$.
\end{proof}   

\begin{rem}\label{remnotexist}
{\rm It follows from Lemma~\ref{lmnotexist} that polynomials of $\bar{T}$ can only 
have negative roots of total multiplicity $6$ and positive roots 
of total multiplicity $1$ or $3$ (i.e., either one simple, 
or one simple and one double 
or one triple positive root); these polynomials have no root at $0$ (Lemma~\ref{lmnot0bis}). 
Indeed, when approaching the boundary of $T$, 
the complex conjugate pair can coalesce to form a double positive 
(but never nonpositive) root; the latter might eventually coincide with the 
simple positive root.}
\end{rem}

\section{Plan of the proof of part (1) of Theorem~\protect\ref{maintm}\protect\label{method}}

Suppose that there exists a monic polynomial $P(x,a^*)|_{a^*_{8}=-1}$ with signs of its coefficients 
defined by the sign pattern $\sigma ^0$, with $6$ distinct negative, a simple positive and two complex conjugate roots. 

Then for $a$ close to $a^*\in \mathbb{R}^{8}$, all polynomials $P(x,a)$ share with $P(x,a^*)$ these properties. Therefore the interior of the set $T$ is nonempty. 
In what follows we denote by $\Gamma$ the connected component of 
$T$ to which $a^*$ belongs. 
Denote by $-\delta$ the value of $a_7$ for $a=a^*$ (recall that this value 
is negative). 

\begin{lm}\label{lmKd} 
There exists a compact set $K\subset \bar{\Gamma}$ containing all points of 
$\bar{\Gamma}$ with $a_7\in [-\delta ,0)$. Hence there exists $\delta _0>0$ 
such that for every point of $\bar{\Gamma}$, one has 
$a_7\leq -\delta _0$, and for at least one point of $K$ and for no point 
of $\bar{\Gamma}\backslash K$, the equality $a_7=-\delta _0$ holds.
\end{lm}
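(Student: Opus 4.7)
The plan is to take $K := \bar\Gamma \cap \{a_7 \in [-\delta, 0]\}$. By Lemma~\ref{lmnot0} the coefficient $a_7$ never vanishes on $\bar\Gamma$, so $K$ in fact coincides with $\bar\Gamma \cap \{a_7 \in [-\delta, 0)\}$ and contains exactly the points required by the statement; it is also closed in $\mathbb{R}^8$ as the intersection of two closed sets. Once $K$ is shown to be bounded (hence compact), $\delta_0 := -\max_{a \in K} a_7$ delivers everything else: $\delta_0 > 0$ because $a_7 < 0$ on $\bar\Gamma$, the maximum is attained by continuity on a compact set, and any $a \in \bar\Gamma \setminus K$ satisfies $a_7 < -\delta \leq -\delta_0$, which rules out equality outside~$K$.

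The real work is proving $K$ bounded. I would use the factorisation $P = Q R$ afforded by Remark~\ref{remnotexist}: write $Q(x) = \prod_{i=1}^{6}(x + x_i) = \sum_{i=0}^{6} q_i x^i$ (six negative roots, all $x_i > 0$) and $R(x) = x^3 + r_2 x^2 + r_1 x + r_0$ (three roots with positive real parts, by Lemma~\ref{lmnotexist}), and set $S := \sum_{i=1}^{6} x_i = q_5$. Comparing $a_8 = -1$ and $a_k = \sum_{i+j=k} q_i r_j$ yields $r_2 = -(S+1)$ and
\[
q_4 + r_1 \;=\; a_7 + S(S+1) \;\in\; [\,S^2 + S - \delta,\; S^2 + S\,).
\]
Since Maclaurin's inequality gives $q_4 \leq 5S^2/12$, bounded $S$ forces $r_1$ bounded, and then $u, v, w_1$ are bounded via $w_1 + 2u = S + 1$ and $r_1 = 2uw_1 + u^2 + v^2$; hence all nine roots of $P$ are bounded and so is every coefficient $a_0, \ldots, a_7$.

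The step I expect to be the main obstacle is ruling out $S \to \infty$. I split into two subcases. When $R$ has three real positive roots, $r_1 \leq (w_1 + w_2 + w_3)^2/3 = (S+1)^2/3$, and combining with $q_4 \leq 5S^2/12$ gives $q_4 + r_1 \leq 5S^2/12 + (S+1)^2/3 < S(S+1)$ for large $S$, contradicting the display outright. When $R$ has a complex conjugate pair $u \pm iv$, I would pass to a subsequence along which the normalised quantities $\hat q_i := q_i/S^{6-i}$, $\alpha := u/S$, $\beta := v/S$ converge, and expand each $a_k/S^{9-k}$ as an explicit polynomial $A_k$ in these variables. The requirement $a_7 = O(1)$ forces $A_7 = 0$, which reads $\hat q_4 = 1 - 2\alpha + 3\alpha^2 - \beta^2$ and, together with $\hat q_4 \in [0, 5/12]$, confines $(\alpha, \beta)$ to an explicit compact region. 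Substituting this into the leading coefficients of $a_4, a_5, a_6$ and imposing the signs prescribed by $\sigma^0$ against the Maclaurin inequalities on $\hat q_0, \hat q_1, \hat q_2, \hat q_3$ (which are very tight — for instance $\hat q_1 \leq 1/1296$) produces a contradiction. The only remaining danger is degenerate limits in which the leading-order $A_k$'s all cancel because the normalised roots $x_i/S$ concentrate at a single coordinate; but in those limits the polynomial itself degenerates so that $a_0, a_2, a_3$ or $a_6$ vanish, which is forbidden by Lemmas~\ref{lmnot0} and~\ref{lmnot0bis}. This closes the contradiction, bounds $S$, and hence compactifies $K$.
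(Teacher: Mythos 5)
Your construction of $K$ and of $\delta_0$ from boundedness is correct, and so is the reduction of the whole lemma to the claim that the roots stay bounded while $a_7\in[-\delta,0)$; your treatment of the subcase where $R$ has three real positive roots (Maclaurin giving $q_4+r_1\le 5S^2/12+(S+1)^2/3$, which is eventually smaller than $S^2+S-\delta$) is complete. The genuine gap is the endgame of the other subcase: the contradiction you claim to obtain by ``substituting into the leading coefficients of $a_4,a_5,a_6$ and imposing the signs prescribed by $\sigma^0$ against the Maclaurin inequalities on $\hat q_0,\dots,\hat q_3$'' is asserted, not derived. This is a nontrivial semialgebraic computation in the limit variables $(\hat q_0,\dots,\hat q_4,\alpha,\beta)$, constrained by one equation and several inequalities, and nothing in your text shows that it actually closes; it is the hardest part of the lemma and cannot be left as a claim. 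Your fallback for ``degenerate limits'' is also misstated: if the normalised roots concentrate, what vanishes are coefficients of the \emph{limit of the rescaled polynomials}, not the coefficients $a_0,a_2,a_3,a_6$ of $P$ itself (e.g.\ $a_1=o(S^8)$ is perfectly compatible with $a_1>0$); Lemmas~\ref{lmnot0} and~\ref{lmnot0bis} can only be invoked because that limit again belongs to $\bar{S}$ (the paper's set of coefficient tuples), and this has to be said.

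The irony is that your own setup already contains the correct finish and makes the Maclaurin analysis of $a_4,a_5,a_6$ unnecessary. From $q_4+r_1=a_7+S(S+1)$ with $q_4\ge 0$ and $r_1=2uw_1+u^2+v^2\ge 0$ you get that all nine roots are $O(S)$. Hence the rescaled monic polynomials $S^{-9}P(Sx)$ (here $S$ is your root sum) have bounded coefficients and still realize $\sigma^0$ with the pair $(1,6)$, so their coefficient tuples lie in the closure of the paper's set $S$; their coefficient of $x^7$ equals $a_7/S^2\to 0$, so any limit point is a polynomial of $\bar{S}$ with $a_7=0$, contradicting Lemma~\ref{lmnot0}. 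This is precisely the paper's proof, except that the paper normalises by making the largest coefficient modulus equal to $1$, which makes the boundedness of the rescaled coefficients automatic and removes the need for your case distinction on the roots of $R$ and for the factorisation $P=QR$ altogether. Replace the unverified sign/Maclaurin step by this limit argument and your proof is complete.
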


\begin{proof}
Suppose that there exists an unbounded sequence $\{ a^n\}$ of values 
$a\in \bar{\Gamma}$ with $a_7^n\in [-\delta ,0)$. Hence one can perform 
rescalings $x\mapsto \beta _nx$, $\beta _n>0$, such that the largest of the 
moduli of the coefficients of the monic polynomials $Q_n:=(\beta _n)^{-9}P(\beta _nx,a^n)$ equals $1$. These polynomials belong to $\bar{S}$, not necessarily to $\bar{T}$ because $a_{8}$ after the rescalings, in general, 
is not equal to $-1$. The coefficient of $x^7$ in $Q_n$ equals $a_7^n/(\beta _n)^2$. The sequence $\{ a^n\}$ si unbounded, so there exists a subsequence $\beta _{n_k}$ tending to $\infty$. 
This means that the sequence of monic polynomials $Q_{n_k}\in \bar{S}$ with 
bounded coefficients has a polynomial 
in $\bar{S}$ with $a_7=0$ as one of its limit points 
which contradicts Lemma~\ref{lmnot0}. 

Hence the moduli of the roots and the tuple of coefficients $a_j$ of $P(x,a)\in \bar{\Gamma}$ 
with $a_7\in [-\delta ,0)$ remain bounded from which the existence of $K$ and $\delta _0$ follows. 
\end{proof}

The above lemma implies the existence of a polynomial $P_0\in \bar{\Gamma}$ 
with $a_7=-\delta _0$. We say that $P_0$ is $a_7$-{\em maximal}. Our aim is 
to show that no polynomial of $\bar{\Gamma}$ is $a_7$-maximal which 
contradiction will be the proof of Theorem~\ref{maintm}.

\begin{defi}
{\rm A real univariate polynomial is {\em hyperbolic} if it has only real 
(not necessarily simple) roots. We denote by $H\subset \bar{\Gamma}$ the set 
of hyperbolic polynomials in $\bar{\Gamma}$. Hence these are monic degree 
$9$ polynomials having positive and negative roots of respective total 
multiplicities $3$ and $6$ (vanishing roots are impossible 
by Lemma~\ref{lmnot0}). 
By $U\subset \bar{\Gamma}$ we denote the set of 
polynomials in $\bar{\Gamma}$ having a complex conjugate pair, 
a simple positive 
root and negative roots of total multiplicity $6$. 
Thus $\bar{\Gamma}=H\cup U$ and $H\cap U=\emptyset$. We denote by 
$U_0$, $U_2$, $U_{2,2}$, $U_3$ and $U_4$ the subsets of $U$ for which the 
polynomial $P\in U$ has respectively $6$ simple negative roots, one double 
and $4$ simple negative roots, 
at least two negative roots of multiplicity $\geq 2$, one triple and 
$3$ simple negative roots 
and a negative root of multiplicity $\geq 4$.}
\end{defi}

The following lemma on hyperbolic polynomials is proved in \cite{Ko11}. It is used in the proofs of the other lemmas.

\begin{lm}\label{lmhyp}
Suppose that $V$ is a  
hyperbolic polynomial of degree $d\geq 2$ with no root at $0$. Then: 

(1) $V$ does not have two or more 
consecutive vanishing coefficients. 

(2) If $V$ has a vanishing coefficient, then the signs of 
its surrounding two coefficients are opposite.

(3) The number of  
positive (of negative) roots of $V$ is equal to the number of sign changes 
in the sequence of its coefficients (in the one of $V(-x)$). 
\end{lm}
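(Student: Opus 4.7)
The plan is to prove part (2) first (assuming the two neighbors of the vanishing coefficient are nonzero), deduce part (1) from it by a minimality argument, and then obtain part (3) by combining these with Descartes' rule of signs. The key tool throughout is Rolle's theorem, which ensures every derivative of a hyperbolic polynomial is again hyperbolic.

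For part (2), I assume $a_i = 0$ with $1 \le i \le d-1$ and $a_{i-1}, a_{i+1} \ne 0$, and set $W := V^{(i-1)}/(i-1)!$, which is hyperbolic of degree $d - i + 1 \ge 2$ with $W(0) = a_{i-1} \ne 0$, $W'(0) = i\, a_i = 0$, and $W''(0) = i(i+1)\, a_{i+1}$. A multiplicity count via Rolle shows that the critical points of a hyperbolic polynomial decompose into multiple roots of $W$ itself (with reduced multiplicity) and exactly one simple critical point in each open interval between consecutive distinct roots. Since $W(0) \ne 0$, the critical point $0$ must be of the second kind, so $W''(0) \ne 0$ and $0$ is a strict local extremum of $W$ lying between two roots. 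The second-derivative test then yields $W(0)\, W''(0) < 0$, i.e.\ $a_{i-1} a_{i+1} < 0$.

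For part (1), I take the smallest $i$ with $a_i = a_{i+1} = 0$. The case $i = 0$ would give $V(0) = 0$, excluded by hypothesis. For $i \ge 1$: either $a_{i-1} \ne 0$, in which case the argument for (2) applies at index $i$ and forces $a_{i-1} a_{i+1} < 0$, contradicting $a_{i+1} = 0$; or $a_{i-1} = 0$, in which case $(a_{i-1}, a_i)$ is a consecutive-zero pair starting at $i-1 < i$, contradicting minimality. Together with $a_0, a_d \ne 0$, this proves (1) and renders (2) unconditional.

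For part (3), let $c'$ and $p'$ denote the number of sign changes in the nonzero-coefficient sequences of $V$ and $V(-x)$ respectively. Descartes' rule gives $pos \le c'$ and $neg \le p'$, while hyperbolicity with no root at $0$ gives $pos + neg = d$. I verify $c' + p' = d$ by enumerating consecutive pairs of nonzero coefficients: by (1), each such pair has gap $1$ or $2$; a gap-$1$ pair contributes exactly $1$ to $c' + p'$ (sign change in one of the two sequences but not both), while a gap-$2$ pair has endpoints of opposite sign by (2) and therefore contributes $2$. With $m$ nonzero coefficients there are $m-1$ such pairs and $d+1-m$ zeros, each generating one gap-$2$ pair, so $c' + p' = (m-1) + (d+1-m) = d$. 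Combined with $pos + neg = d$, this forces $pos = c'$ and $neg = p'$. The main obstacle is the geometric step in part (2): showing that the critical point of $W$ at $0$ is nondegenerate, which rests on the sharp count of the critical points of a hyperbolic polynomial together with the hypothesis $W(0) \ne 0$.
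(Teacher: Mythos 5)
Your proof is correct, but for parts (1) and (2) it follows a genuinely different route from the paper's. The paper also starts from the fact that derivatives of hyperbolic polynomials are hyperbolic, but then adds a reversal trick: after differentiating down to a polynomial of the form $x^{\ell}L+c$ with $c\neq 0$, it passes to $x^DV(1/x)$ (again hyperbolic, since the roots are the nonzero reciprocals) and differentiates once more to reduce everything to a two-term polynomial $ax^{\ell}+b$ with $a\neq 0\neq b$, which is hyperbolic only if $\ell\leq 2$, and for $\ell=2$ only if $ab<0$; this yields (1) and (2) in one stroke. You instead stay with $W:=V^{(i-1)}/(i-1)!$ and argue geometrically: the sharp Rolle count shows every critical point of a hyperbolic polynomial that is not a multiple root is a simple, nondegenerate extremum strictly between two consecutive distinct roots, whence $W''(0)\neq 0$ and $W(0)W''(0)<0$. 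Your version needs the (standard but slightly heavier) exact enumeration of the roots of $W'$ and a separate second-derivative-test step, while the paper's reduction to $ax^{\ell}+b$ is shorter and makes the case distinction $\ell\geq 3$ versus $\ell=2$ completely mechanical; on the other hand your argument delivers the stronger local statement that a vanishing coefficient forces a nondegenerate extremum of the corresponding derivative at the origin, and it proves (1) and (2) simultaneously from the single implication ``$a_{i-1}\neq 0$, $a_i=0$ $\Rightarrow$ $a_{i-1}a_{i+1}<0$'' plus minimality. Your part (3) is essentially identical to the paper's: both combine Descartes' rule, the identity $pos+neg=d$, and the count showing that the sign changes of $V$ and of $V(-x)$ add up to $d$ (the paper writes this as $\Phi+\Psi+2\Lambda=d$, you write it as the gap-$1$/gap-$2$ enumeration).
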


By a sequence of lemmas we consecutively decrease the set of possible $a_7$-{\em maximal} polynomials until in the end it turns out that this set must be empty. 
The proofs of the lemmas of this section except Lemma~\ref{lmKd} are given in Sections~\ref{prlm} (Lemmas~\ref{lm24} -- \ref{lmH3bis}), \ref{prlmbis} (Lemma~\ref{lmH4bis}) and \ref{prlmter} (Lemmas~\ref{lmH4ter} -- \ref{lmH6}).

\begin{lm}\label{lm24}
(1) No polynomial of $U_{2,2}\cup U_4$ is $a_7$-maximal.

(2) For each polynomial of $U_3$, there exists a polynomial of $U_0$ 
with the same values of $a_7$, $a_5$, $a_4$ and $a_1$.

(3) For each polynomial of $U_0\cup U_2$, there exists a polynomial of 
$H\cup U_{2,2}$ with the same values of $a_7$, $a_5$, $a_4$ and $a_1$.
\end{lm}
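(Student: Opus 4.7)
The plan is to use local root-perturbations adapted to the multiplicity structure of each class. For part~(1), I would exhibit explicit deformations that strictly increase $a_7$ while remaining inside $\bar{\Gamma}$, directly contradicting $a_7$-maximality. For parts~(2) and~(3), the idea is to work inside the four-codimensional level set $\{a_7=a_7^*,\,a_5=a_5^*,\,a_4=a_4^*,\,a_1=a_1^*\}$ and either invoke the implicit function theorem (for part~(2)) or run a continuous path argument (for part~(3)) to reach a polynomial in the target class.

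For part~(1), let $P\in U_4$ have a negative root at $-c$ of multiplicity $m\geq 4$. A real splitting of this root into $m$ nearby simple roots $-c+\delta_j$ with $\sum_j\delta_j=0$ preserves $a_8$ but, using $a_7=\sigma_2(\text{roots})$, changes $a_7$ by exactly $\sum_{i<j}\delta_i\delta_j=-\tfrac12\sum_j\delta_j^2<0$; thus this local split alone decreases $a_7$. However, a compensating shift $\rho_i\mapsto\rho_i+\mu$, $\rho_j\mapsto\rho_j-\mu$ applied to two distinct roots $\rho_i\neq\rho_j$ outside the multiple root preserves $a_8$ and produces a first-order change $\mu(\rho_j-\rho_i)+O(\mu^2)$ in $a_7$. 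Such a distinct pair always exists when $m\leq 6$, so by choosing the sign of $\mu$ and fixing $|\delta_j|\ll |\mu|^{1/2}$, the first-order gain dominates the quadratic loss from the split; the perturbed polynomial lies in $\bar{\Gamma}$ with strictly larger $a_7$. The same strategy, with each of the two double negative roots of a $U_{2,2}$ polynomial split independently, settles the $U_{2,2}$ case.

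For part~(2), let $P\in U_3$ have triple negative root $-c$ and simple negative roots $-r_1,-r_2,-r_3$. I would introduce the two-parameter split $(x+c)^3\mapsto\prod_{j=1}^3(x+c-\delta_j)$ with $\delta_1+\delta_2+\delta_3=0$ and allow four additional shifts $r_i\mapsto r_i+\lambda_i$ ($i=1,2,3$) and $w\mapsto w+\lambda_4$ of the positive root $w$. This yields a six-parameter family of nearby $U_0$ polynomials, since generic $\delta$ makes the three split roots distinct. The linearization from $(\delta_1,\delta_2,\lambda_1,\ldots,\lambda_4)$ to $(a_7,a_5,a_4,a_1)$ at $P$ is a $4\times 6$ matrix; its rank equals $4$, verifiable by a Vandermonde-type determinant computation using the pairwise distinctness of $-c,-r_1,-r_2,-r_3$. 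The implicit function theorem then produces a nontrivial family of $U_0$ polynomials whose four designated coefficients coincide with those of $P$, and any such polynomial different from $P$ furnishes the required $P'$.

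For part~(3), starting from $P\in U_0\cup U_2$, I would construct a continuous path inside $\bar{\Gamma}$ along which $(a_7,a_5,a_4,a_1)$ stays constant and the imaginary part $\beta>0$ of the complex conjugate pair strictly decreases; existence of such a direction at generic points of the level set follows from a rank count on the coefficient map. By Lemma~\ref{lmKd} the path remains bounded, and by Lemmas~\ref{lmnot0} and~\ref{lmnot0bis} together with Remark~\ref{remnotexist} it cannot exit $\bar{\Gamma}$ through a vanishing coefficient or a zero root. Hence it terminates at a boundary point where either $\beta=0$ (the complex pair merges into a double real root, necessarily positive by Remark~\ref{remnotexist}, placing the limit in $H$) or two negative roots coincide (placing the limit in $U_2$ or $U_{2,2}$). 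If only $U_2$ is reached, the flow is restarted from this new $U_2$ polynomial, and after at most one further iteration a polynomial of $H\cup U_{2,2}$ is obtained. The main obstacle will be verifying that the $\beta$-decreasing direction genuinely exists at every intermediate point (a transversality argument for the coefficient map) and that no spurious sign change in a coefficient forces the flow to exit $\bar{\Gamma}$ before the desired terminus.
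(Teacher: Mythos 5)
Your overall strategy (perturb within each multiplicity stratum and track $a_7$) is in the right spirit, but each part has a genuine gap, and in each case the gap is precisely the point the paper's proof is organized around: keeping the perturbed polynomial inside $\bar{\Gamma}$, i.e.\ preserving the sign pattern $\sigma^0$. In part (1) your split-and-shift deformation controls only $a_8$ and $a_7$. But a polynomial of $U_{2,2}\cup U_4$ may lie on the part of the boundary of $\Gamma$ where $a_1$, $a_4$ or $a_5$ vanishes (by Lemmas~\ref{lmnot0} and~\ref{lmnot0bis} these are the only coefficients that can vanish on $\bar{S}$), and a generic perturbation of the roots will push such a coefficient to the wrong sign, taking the polynomial out of $\bar{S}$ altogether; a larger value of $a_7$ outside $\bar{\Gamma}$ proves nothing about $a_7$-maximality. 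The paper's proof consists exactly in computing the Jacobians $\partial (a_{8},a_7,a_1,a_4)/\partial (\xi ,\eta ,w,u)$ and $\partial (a_{8},a_7,a_1,a_5)/\partial (\xi ,\eta ,w,u)$ for the parametrizations $(x+u)^2(x+v)^2S\Delta$ and $(x+u)^4S\Delta$, showing they do not vanish, and thereby increasing $a_7$ while freezing $a_8=-1$ and the possibly-vanishing coefficients $a_1$ and $a_4$ (or $a_5$); the remaining coefficients are bounded away from $0$ and keep their signs. Your auxiliary claim that two distinct real roots exist outside the multiple root also fails when the negative root has multiplicity $6$ (a case contained in $U_4$), where the only other real root is the single positive one.

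For part (2) the paper does something much lighter than your six-parameter implicit-function argument: it perturbs $P$ by $\mp \varepsilon x^2(x+u)$, which splits the triple root at $-u$ into three simple negative roots while changing only $a_2$ and $a_3$ (by $O(\varepsilon )$, so their signs survive); in particular $a_8$, $a_7$, $a_5$, $a_4$, $a_1$ are literally unchanged. Your version does not constrain $\sum _i\lambda _i$, so $a_8$ is not preserved and the resulting polynomial need not lie in $\bar{T}$ at all; you also assert rather than verify the rank-$4$ claim, and you do not check that the point produced by the implicit function theorem has its three split roots real, distinct and negative, nor that the untouched coefficients keep their signs. For part (3) you have written a program, not a proof: the existence of a $\beta$-decreasing direction tangent to the level set of $(a_7,a_5,a_4,a_1)$ at \emph{every} point of the path is exactly what needs proving, and you defer it. The paper instead uses two explicit one-parameter deformations, first $P-t$ and then $P-t_0-s(x^2-v^2)^2(x^2+v^2)$, each chosen so that the only coefficients it modifies ($a_0$, then $a_6,a_4,a_2,a_0$) move in the direction prescribed by $\sigma ^0$, and it stops at the first root collision; monotonicity replaces your transversality hypothesis. (Incidentally, that second deformation does change $a_4$, so the paper's own proof establishes preservation of $a_7$, $a_5$, $a_1$ but not of $a_4$ in part (3); only the preservation of $a_7$ is used in the sequel.)
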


Lemma~\ref{lm24} implies that if there exists an $a_7$-maximal polynomial in 
$\bar{\Gamma}$, 
then there exists such a polynomial in $H$. So from now on, we aim 
at proving that $H$ contains no such polynomial hence $H$ and $\bar{\Gamma}$ 
are empty.

\begin{lm}\label{lmH2}
There exists no polynomial in $H$ having exactly two distinct real roots.
\end{lm}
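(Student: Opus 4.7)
The plan is to reduce the hypothesis to a very rigid one-parameter family and then extract a contradiction from two strategically chosen coefficients.

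First I would invoke Remark~\ref{remnotexist}: any $P\in H$ has positive roots of total multiplicity $3$, negative roots of total multiplicity $6$, and no root at $0$. If in addition $P$ has only two distinct real roots, these must be one positive root $u>0$ of multiplicity $3$ and one negative root $-v$ (with $v>0$) of multiplicity $6$, so $P$ is forced into the one-parameter family
\[
P(x)=(x-u)^3(x+v)^6, \qquad u,v>0.
\]

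Next I would compute two coefficients of $P$ by a short binomial expansion:
\[
a_7=3(u-v)(u-5v), \qquad a_2=-3uv^4(u-v)(5u-v).
\]
The sign pattern $\sigma^0$ prescribes $a_7<0$ and $a_2>0$; by Lemma~\ref{lmnot0}, both $a_7$ and $a_2$ remain nonzero throughout $\bar{T}$, so any $P\in H$ inherits both strict sign requirements.

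The contradiction is then immediate. From $a_7<0$ one has $(u-v)(u-5v)<0$, equivalently $v<u<5v$; in particular $u>v>0$. But then both $u-v>0$ and $5u-v>0$, so $a_2=-3uv^4(u-v)(5u-v)<0$, which contradicts $a_2>0$. Hence no polynomial in $H$ can have only two distinct real roots.

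I expect no real obstacle beyond identifying the right pair of coefficients to test against each other. The pair $(a_7,a_2)$ is the natural one: both factor through $u-v$, while the remaining linear factors $u-5v$ and $5u-v$ are driven into opposite sign regimes precisely by the constraint $a_7<0$, producing the immediate clash with the constraint on $a_2$.
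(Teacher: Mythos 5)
Your proof is correct and follows essentially the same route as the paper: both reduce to the forced form $(x-u)^3(x+v)^6$ (via Remark~\ref{remnotexist} and hyperbolicity) and then exhibit two coefficient constraints that cannot hold simultaneously. The only difference is the choice of coefficients --- the paper plays $a_8=-1$ against $a_1>0$ (obtaining $u>2w$ versus $6u-3w=-1$), while you play $a_7<0$ against $a_2>0$; your computations of $a_7=3(u-v)(u-5v)$ and $a_2=-3uv^4(u-v)(5u-v)$ check out, and the appeal to Lemma~\ref{lmnot0} to keep both signs strict on the closure is exactly right.
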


\begin{lm}\label{lmHtriple}
The set $H$ contains no polynomial having one triple positive root and  
negative roots of total multiplicity $6$.
\end{lm}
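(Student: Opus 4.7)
The plan is to assume such a $P\in H$ exists and derive a contradiction from just two of its coefficients, namely $a_8$ and $a_1$, via the arithmetic--harmonic mean inequality. Write $P(x)=(x-w)^3\prod_{i=1}^6(x+b_i)$ with $w>0$ and $b_i>0$ (counted with multiplicity). Using the formula $a_{9-k}=(-1)^k e_k(r)$ for the elementary symmetric polynomials $e_k$ in the multiset of roots $r=(w,w,w,-b_1,\ldots,-b_6)$, one has
\[
a_8 \;=\; -e_1(r) \;=\; \sum_{i=1}^6 b_i - 3w,
\]
and the condition $a_8<0$ imposed by $\sigma^0$ gives $\sum b_i < 3w$.

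Next, since $a_1=e_8(r)=e_9(r)\sum_i 1/r_i$ with $e_9(r)=w^3\prod b_i>0$ and $\sum 1/r_i=3/w-\sum 1/b_i$, one obtains
\[
a_1 \;=\; w^3\prod b_i\cdot\Bigl(\frac{3}{w}-\sum_{i=1}^6\frac{1}{b_i}\Bigr),
\]
and the condition $a_1>0$ yields $\sum 1/b_i<3/w$. Multiplying the two bounds,
\[
\Bigl(\sum_{i=1}^6 b_i\Bigr)\Bigl(\sum_{i=1}^6\frac{1}{b_i}\Bigr) \;<\; 3w\cdot\frac{3}{w} \;=\; 9.
\]
However, the arithmetic--harmonic mean inequality applied to the six positive numbers $b_1,\ldots,b_6$ gives $(\sum b_i)(\sum 1/b_i)\geq 6^2=36$, a contradiction that completes the proof.

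The main obstacle is recognizing which of the ten sign conditions to exploit; in hindsight the choice is natural, since $a_8$ and $a_1$ encode the reciprocal quantities $\sum r_i$ and $\sum 1/r_i$ of the roots, and the triple root at $w$ contributes $3w$ and $3/w$ respectively, which combine to give a $w$-independent bound of $9$, well below the AM--HM threshold $36$. The argument uses no other sign condition and is insensitive to any multiplicities among the $b_i$. A minor technical point is that in the closure $\bar{\Gamma}$ the coefficient $a_1$ may vanish (see Remark~\ref{rem2lem}), but then $\sum 1/b_i = 3/w$ exactly; combined with the strict inequality $\sum b_i < 3w$, which holds because $a_8\neq 0$ in $\bar{\Gamma}$ by the same remark, one still obtains $(\sum b_i)(\sum 1/b_i)<9<36$.
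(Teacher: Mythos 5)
Your proof is correct, and it reaches the contradiction by a genuinely different and more elementary route than the paper. The paper also pits $a_1$ against $a_8$, but it does so by normalizing $a_8=-1$, solving for the triple root $\xi_*=(u_1+\cdots+u_6+1)/3$, computing $27a_1|_{\xi=\xi_*}=-(-3u_1\cdots u_6+X+Y)(u_1+\cdots+u_6+1)^2$ with computer algebra, and then exhibiting $-3u_1\cdots u_6+X+Y$ as a sum of positive terms to conclude $a_1<0$. Your observation that $a_8$ and $a_1$ are, up to positive factors, $\sum b_i-3w$ and $3/w-\sum 1/b_i$, so that the two sign conditions multiply to $\bigl(\sum b_i\bigr)\bigl(\sum 1/b_i\bigr)<9$ against the Cauchy--Schwarz/AM--HM bound of $36$, proves the same incompatibility without any normalization or symbolic computation; one can check your formula is literally equivalent to the paper's, since $9e_6-(e_1+1)e_5=-(-3e_6+X+Y)$ in the elementary symmetric functions of the $u_i$. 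Your treatment of the boundary cases is also sound: on $\bar{\Gamma}$ one has $a_8=-1$ exactly (so $\sum b_i<3w$ is strict) and $a_1\geq 0$, which still gives $\bigl(\sum b_i\bigr)\bigl(\sum 1/b_i\bigr)\leq 9<36$. What the paper's computation buys is uniformity of method with the neighbouring lemmas (which all proceed by explicit coefficient formulas in MAPLE); what yours buys is a transparent, verifiable two-line argument with a large margin ($9$ versus $36$) that makes clear why the triple positive root is the obstruction.
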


Lemma~\ref{lmHtriple} and Remark~\ref{rem1or3} 
imply that a polynomial in $H$ (if any) 
satisfies the following condition: 
\vspace{2mm}

{\em Condition A.} Any polynomial $P \in H$ has a double and a simple positive roots 
and negative roots of total multiplicity~$6$.

\begin{lm}\label{lmH3}
There exists no polynomial $P\in H$ having exactly three distinct real roots 
and satisfying the conditions $\{ a_1=0, a_4=0\}$ or 
$\{ a_1=0, a_5=0\}$.
\end{lm}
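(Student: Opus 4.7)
By \emph{Condition A} and the hypothesis that $P$ has exactly three distinct real roots, any candidate $P\in H$ must factor as
\[
P(x) \;=\; (x-u)^{2}(x-v)(x+w)^{6}
\]
with $u,v,w>0$ and $u\neq v$ (the inequality $u\neq v$ is forced by Lemma~\ref{lmHtriple}). The normalization $a_{8}=-1$ inherited from $P\in H\subset \bar{T}$ reads $2u+v=6w+1$. My plan is to write $a_{1}$, $a_{4}$, $a_{5}$ explicitly in the parameters $u,v,w$ and to show that neither of the systems $\{a_{1}=0,\,a_{4}=0\}$ or $\{a_{1}=0,\,a_{5}=0\}$ is consistent with $u,v,w>0$ and $2u+v=6w+1$.

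Setting $f(x)=(x-u)^{2}(x-v)=x^{3}-sx^{2}+rx-q$ with $s=2u+v$, $r=u^{2}+2uv$, $q=u^{2}v$, and $g(x)=(x+w)^{6}$, direct expansion of $P=f\cdot g$ gives
\begin{align*}
a_{1} &= w^{6}r-6w^{5}q,\\
a_{4} &= 6w^{5}-15w^{4}s+20w^{3}r-15w^{2}q,\\
a_{5} &= 15w^{4}-20w^{3}s+15w^{2}r-6wq.
\end{align*}
Using $v=6w+1-2u$ and $s=6w+1$ one turns $r$ and $q$ into polynomials in $(u,w)$; the condition $a_{1}=0$ then becomes the quadratic $12u^{2}-(39w+6)u+12w^{2}+2w=0$, and after additionally substituting $q=wr/6$ (the equivalent form of $a_{1}=0$) each of $a_{4}=0$ and $a_{5}=0$ likewise reduces to a quadratic in $u$ with coefficients in $w$.

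For each of the two systems, I would form the linear combination of the two quadratics that eliminates the $u^{2}$-term; the resulting linear equation expresses $u$ as an explicit rational function of $w$. Substituting this expression back into the first quadratic and clearing denominators reduces the whole question to a single polynomial equation $R(w)=0$, cubic in $w$. The expectation is that in both cases all four coefficients of $R(w)$ turn out to have the same sign, so that $R(w)=0$ has no positive real solution; this contradicts $w>0$ and yields the lemma in both cases.

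The only obstacle is arithmetic: one must carry out the polynomial expansions to verify that the cubic $R(w)$ really has sign-definite coefficients in both of the systems. There is no obvious structural reason \emph{a priori} for this sign alignment, so the check has to be done by direct symbolic computation.
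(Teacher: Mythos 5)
Your approach is essentially the paper's: the paper likewise reduces to the factorization $(x+u)^{6}(x-w)^{2}(x-\xi)$ with $a_{8}=-1$ and then disposes of the two systems $\{a_1=a_4=0\}$ and $\{a_1=a_5=0\}$ by a computer check (numerically solving each $3\times 3$ system and observing that every real solution has a nonpositive coordinate). The one step you leave open — the ``expectation'' that the eliminant is a cubic with sign-definite coefficients — does check out, and in fact makes the argument tighter than a purely numerical root search: in your notation, after substituting $q=wr/6$ and dividing by the positive quantities $u$ and powers of $w$, the three conditions become
$$Q_1:=12u^2-(39w+6)u+12w^2+2w=0,\quad Q_4:=105u^2-(420w+70)u+168w^2+30w=0,\quad Q_5:=42u^2-(168w+28)u+105w^2+20w=0;$$
eliminating the $u^{2}$-terms from $\{Q_1,Q_4\}$ and $\{Q_1,Q_5\}$ gives $u=2w(126w+25)/\bigl(35(9w+2)\bigr)$ and $u=2w(63w+13)/\bigl(7(9w+2)\bigr)$ respectively (both denominators are nonzero for $w>0$, so no solutions are lost), and back-substitution into $Q_1$ yields
$$-571536\,w^3-374220\,w^2-79920\,w-5600=0\qquad\text{resp.}\qquad -35721\,w^3-36288\,w^2-10308\,w-896=0,$$
each impossible for $w>0$. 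With that arithmetic written out your proof is complete.
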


It follows from the lemma and from Lemma~\ref{lmnot0} that a polynomial  
$P\in H$ having exactly three distinct real roots (hence a double and a 
simple positive and an $6$-fold negative one) 
can satisfy at most one of the 
conditions $a_1=0$, $a_4=0$ and 
$a_5=0$.

\begin{lm}\label{lmH3bis}
No polynomial in $H$ having exactly three distinct real roots is $a_7$-maximal.
\end{lm}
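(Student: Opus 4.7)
Assume for contradiction that some $P_0\in H$ with exactly three distinct real roots is $a_7$-maximal. By Condition~A together with Remark~\ref{rem1or3}, $P_0=(x-w)(x-g)^2(x+b)^6$ with $w,g,b>0$, $w\ne g$, and (from $a_8=-1$) $w+2g-6b=1$. Lemmas~\ref{lmnot0} and \ref{lmnot0bis} give that $a_0,a_2,a_3,a_6$ are nonzero with signs dictated by $\sigma^0$, while Lemma~\ref{lmH3} together with Remark~\ref{rem2lem} shows that at most one of $a_1,a_4,a_5$ vanishes at $P_0$.

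The basic deformation I would introduce is
\[
P_\epsilon \;:=\; P_0+\epsilon(x-w)(x+b)^6 \;=\; (x-w)\bigl[(x-g)^2+\epsilon\bigr](x+b)^6,\qquad \epsilon>0,
\]
whose root structure consists of the simple positive root $w$, the complex conjugate pair $g\pm i\sqrt\epsilon$, and the $6$-fold negative root $-b$; in particular $P_\epsilon\in \bar U_4$. As $(x-w)(x+b)^6$ is monic of degree $7$, one has $a_8(P_\epsilon)=-1$ and $a_7(P_\epsilon)=a_7(P_0)+\epsilon>a_7(P_0)$. Splitting the $6$-fold negative root of $P_\epsilon$ into six distinct simple negative roots by a further small perturbation then produces a polynomial in $S$ near $P_0$, and hence in the same component $\Gamma$ as the nearby $\Gamma$-points accumulating at $P_0$, provided the sign pattern of $P_\epsilon$ is $\sigma^0$. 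By continuity the signs of $a_0,a_2,a_3,a_6,a_7$ are preserved for small $\epsilon>0$, and so are those of any $a_k$ ($k\in\{1,4,5\}$) already nonzero at $P_0$. When $a_k=0$ at $P_0$, one has $a_k(P_\epsilon)=\epsilon q_k$, where $q_k$ is the $x^k$-coefficient of $(x-w)(x+b)^6$; explicit expansion yields $q_1=b^5(b-6w)$, $q_4=5b^2(4b-3w)$, $q_5=3b(5b-2w)$. If $q_k$ has the sign demanded by $\sigma^0$ at position $k$, the perturbation closes the argument.

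The main obstacle is the exceptional subcase when $a_k=0$ at $P_0$ and $q_k$ has the wrong sign. For instance, expanding $a_1=gb^5(b(g+2w)-6wg)$ shows that $a_1=0$ forces $b=6wg/(g+2w)$, whence $q_1=b^5(b-6w)=-12w^2 b^5/(g+2w)<0$, opposite to what $\sigma^0$ requires. To treat such subcases, I would enlarge the deformation to the three-parameter family $(\delta w,\delta g,\delta\epsilon)$, the constraint $w+2g-6b=1$ being maintained by adjusting $b$. Using the tangent polynomials $\partial_w P_0|_{a_8=-1}=-(w+b)(x-g)^2(x+b)^5$ and $\partial_g P_0|_{a_8=-1}=-2(g+b)(x-w)(x-g)(x+b)^5$, together with $\partial_\epsilon P_\epsilon|_{\epsilon=0}=(x-w)(x+b)^6$, I would write $\delta a_7$ and $\delta a_k$ as linear functionals on $\mathbb{R}^3$. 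The feasible cone $\{\delta a_7>0,\ \delta a_k\text{ has the correct sign}\}$ is nonempty unless $\nabla a_7$ and $\nabla a_k$ are antiparallel on the active side of the constraint. Ruling out this antiparallelism for each $k\in\{1,4,5\}$, by algebraic manipulation using $a_k(w,g,b)=0$ and the positivity $w,g,b>0$, is the main technical step of the proof.
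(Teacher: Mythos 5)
Your route is genuinely different from the paper's: you leave the hyperbolic stratum by splitting the double positive root into a complex conjugate pair via $P_\epsilon=P_0+\epsilon(x-w)(x+b)^6$, which keeps $a_8$ and raises $a_7$ by $\epsilon$, whereas the paper stays inside the three-parameter hyperbolic family $(x+u)^6(x-w)^2(x-\xi)$ and shows that the Jacobian of $(a_8,a_7,a_k)$ with respect to the roots has rank $3$, so that $a_7$ can be increased while $a_8$ and the vanishing coefficient $a_k$ are held fixed. In the subcase where none of $a_1,a_4,a_5$ vanishes at $P_0$, your one-line perturbation is complete and in fact simpler than the paper's argument. Your computations of $q_1,q_4,q_5$ and of the sign obstruction when $a_1=0$ (namely $q_1=-12w^2b^5/(g+2w)<0$ against the required $a_1>0$) are correct.

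However, the proposal has a genuine gap precisely where the paper does its real work. When one of $a_1,a_4,a_5$ vanishes and the induced sign $\epsilon q_k$ is wrong, you only sketch a three-parameter cone argument and explicitly defer ``the main technical step'': ruling out that $\nabla a_7$ and $\nabla a_k$ are negatively proportional on the constraint surface $\{a_8=-1\}$. This is not a routine verification. In the paper's coordinates the analogous degeneracy occurs on explicit loci ($u=5w$ for $k=1$, $w=2u$ for $k=4$, $w=5u$ for $k=5$), and on each of these loci the non-maximality argument genuinely fails; the paper must then exclude them by separate sign contradictions ($a_3<0$ when $u=5w$; the incompatibility of $a_1>0$ with $a_8<0$ when $w=2u$; $a_6>0$ when $w=5u$). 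Without carrying out the corresponding computation and exclusion, your argument does not establish the lemma in exactly the cases that are hard. A secondary, lesser point: to contradict $a_7$-maximality you must land in $\bar{\Gamma}$, i.e.\ produce nearby points of $S$ lying in the \emph{same} connected component $\Gamma$; you assert this via accumulation at $P_0$, which is essentially the same implicit step the paper takes, so I would not count it against you, but it deserves a sentence.
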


Thus an $a_7$-maximal polynomial in $H$ (if any) must satisfy 
Condition A and have at least four 
distinct real roots.

\begin{lm}\label{lmH4bis}
The set $H$ contains no polynomial having a double and a simple positive roots 
and exactly two distinct negative roots of total multiplicity $6$, and which 
satisfies either the conditions $\{ a_1=a_4=0\}$ or $\{ a_1=a_5=0\}$.
\end{lm}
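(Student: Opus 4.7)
Suppose for contradiction that such a polynomial $P \in H$ exists, and write
\[
P(x) = (x-u)^2(x-v)(x+b)^k(x+c)^{6-k}, \quad u, v, b, c > 0,\ b \neq c,\ k \in \{1, \ldots, 5\}.
\]
By the symmetry $(b,c,k) \leftrightarrow (c,b,6-k)$ we may assume $k \in \{1,2,3\}$. The plan is to factor $P = T \cdot M$, where
\[
T(x) = (x-u)^2(x-v) = x^3 - (2u+v)x^2 + u(u+2v)x - u^2 v
\]
has sign pattern $(+,-,+,-)$, and $M(x) = (x+b)^k(x+c)^{6-k} = \sum_{j=0}^6 m_j x^j$ has all positive coefficients with $m_6 = 1$. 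Then
\[
a_j = -u^2 v\, m_j + u(u+2v)\,m_{j-1} - (2u+v)\,m_{j-2} + m_{j-3},
\]
with $m_l = 0$ for $l \notin \{0,\ldots,6\}$. The hypothesis $a_1 = 0$ becomes $m_1/m_0 = (u+2v)/(uv)$, equivalently $k/b + (6-k)/c = 2/u + 1/v$; the second hypothesis, $a_4 = 0$ or $a_5 = 0$, is one of the two analogous four-term linear relations among consecutive $m_j$.

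My first approach would be a direct sign analysis. For each $k \in \{1,2,3\}$ and each of the two cases ($a_1 = a_4 = 0$ versus $a_1 = a_5 = 0$), I would express $m_0,\ldots,m_6$ as elementary symmetric polynomials in $k$ copies of $b$ and $6-k$ copies of $c$, substitute the two linear relations into the formulas for the remaining coefficients $a_2, a_3, a_5$ (resp.\ $a_4$), $a_6, a_7, a_8$, and show by elimination that the full list of strict sign requirements ($a_2 > 0$, $a_3 > 0$, $a_6 < 0$, $a_7 < 0$, $a_8 < 0$, together with the case-dependent sign on whichever of $a_4, a_5$ does not vanish) cannot simultaneously hold. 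The computation should reduce to the definite sign of a concrete rational expression in $b,c,u,v$ that contradicts what the sign pattern requires.

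Should the direct algebra prove unwieldy, a compactness/deformation argument is available as a backup. Normalizing by $a_8 = -1$, the configurations lie in a compact set by Lemma~\ref{lmKd}; the two vanishing conditions cut out a closed subvariety, and the strict sign requirements define an open subset thereof. If this open subset were nonempty, its closure would meet the boundary, but the possible degenerations are all excluded: (i) $b \to c$ reduces $P$ to a polynomial with exactly three distinct real roots and still $a_1 = 0$ together with $a_4 = 0$ or $a_5 = 0$, forbidden by Lemma~\ref{lmH3}; (ii) $u \to v$ creates a triple positive root, forbidden by Lemma~\ref{lmHtriple}; (iii) a further coefficient vanishing is forbidden by Remark~\ref{rem2lem} together with Lemmas~\ref{lmnot0} and \ref{lmnot0bis}. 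Hence the open subset must have been empty from the outset.

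The main obstacle I anticipate is the algebraic verification of the sign contradiction itself: the explicit expansion of the $m_j$ in terms of $b, c, k$ produces many terms, and isolating a single strict-sign-violating quantity requires careful bookkeeping and possibly further subdivision into cases based on the relative ordering of $u, v, b, c$. The deformation argument is conceptually cleaner but depends on all three boundary degenerations being independently ruled out, so neither route is entirely routine.
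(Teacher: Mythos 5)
Your setup coincides with the paper's: the three cases $k=1,2,3$ for the multiplicities of the two negative roots are exactly the paper's cases A) $(5,1)$, B) $(4,2)$ and C) $(3,3)$, and the paper likewise normalizes away one parameter (it places a negative root at $-1$ rather than imposing $a_8=-1$) and then eliminates a variable using $a_1=0$. But your ``first approach'' is only an announcement of the computation, not the computation itself, and that computation is the entire content of the lemma. Your hope that everything ``should reduce to the definite sign of a concrete rational expression'' is not borne out: in the paper, Case A requires splitting the range of $t$ into several intervals and using a different coefficient ($a_3$, $a_4$, $a_8$ or $a_6$) to get a contradiction on each; Case B requires a monotonicity argument in $T$ on the interval $[0,T_0]$; and Case C occupies two auxiliary lemmas in which the positive quadrant of the $(t,w)$-plane is cut into four regions and resultants of $H^*$ with $a_4^*$ and $a_5^*$ are computed and their real roots isolated numerically to show the relevant zero sets do not meet where $a_8<0$. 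None of this is routine elimination, and without it the claim is unproved.

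The backup deformation argument has a genuine logical gap. From the fact that the set $U$ of admissible parameter tuples is open in the closed subvariety $V$ cut out by the two vanishing conditions, you cannot conclude that $\overline{U}\setminus U$ is nonempty: $U$ could be a whole connected component of $V$ (for instance a compact oval of the curve $\{a_1=a_4=0,\ a_8=-1\}$ lying entirely in the region where all the strict sign inequalities hold), in which case no boundary degeneration of type (i), (ii) or (iii) is ever reached and excluding those degenerations proves nothing. To rule out such a component you would again need global information about the variety --- which is precisely what the paper's resultant and root-counting computations supply. (A smaller issue: Lemma~\ref{lmKd} concerns the compactness of the part of $\bar\Gamma$ with $a_7\in[-\delta,0)$ and does not directly give compactness of your normalized configuration set, so even the compactness premise would need a separate argument.) In short, the parametrization is right, but neither route as written closes the proof.
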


At this point we know that an $a_7$-maximal polynomial of $H$ satisfies 
Condition A and one of the two following conditions:
\vspace{2mm}

{\em Condition B.} It has exactly four distinct real roots and satisfies 
exactly one or none of the equalities 
$a_1=0$, $a_4=0$ or $a_5=0$.
\vspace{2mm}

{\em Condition C.} It has at least five distinct real roots.

\begin{lm}\label{lmH4ter}
The set $H$ contains no $a_7$-maximal 
polynomial satisfying Conditions A and B.
\end{lm}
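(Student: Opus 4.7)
The plan is to contradict $a_7$-maximality by computing the gradient of $a_7$ in root-parameter coordinates and exhibiting a small perturbation that strictly increases $a_7$ while keeping the polynomial inside $\bar\Gamma$. Under Conditions~A and B, $P_0$ admits the representation
\begin{equation*}
P_0(x) = (x-p)^2(x-q)(x+r_1)^{m_1}(x+r_2)^{m_2},
\end{equation*}
with $p,q,r_1,r_2>0$, $p\neq q$, $r_1\neq r_2$, and $(m_1,m_2)\in\{(5,1),(4,2),(3,3)\}$ (up to swapping indices). The identity $-a_8=e_1(\text{roots})$ with $a_8=-1$ gives $2p+q=1+m_1r_1+m_2r_2$, and treating $q$ as determined by this constraint leaves a three-parameter family in $(p,r_1,r_2)$.

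The main computational ingredient is Newton's identity $2e_2=e_1^2-p_2$. Since $a_7=e_2$ and $e_1=1$, it simplifies to
\begin{equation*}
a_7=\tfrac{1}{2}\bigl(1-2p^2-q^2-m_1r_1^2-m_2r_2^2\bigr).
\end{equation*}
Using $\partial q/\partial p=-2$ and $\partial q/\partial r_i=m_i$, one computes
\begin{equation*}
\tfrac{\partial a_7}{\partial p}=2(q-p),\qquad \tfrac{\partial a_7}{\partial r_i}=-m_i(q+r_i)<0.
\end{equation*}
Condition~A forces $p\neq q$, so $\partial a_7/\partial p$ is nonzero with a determined sign, while the other two partials are strictly negative. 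Hence there are several coordinate directions (increase or decrease $p$ according to whether $q>p$ or $q<p$; decrease $r_1$; decrease $r_2$) along which $a_7$ grows to first order.

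It remains to certify that at least one such perturbation keeps the polynomial in $\bar\Gamma$. Membership in $H\subset\bar\Gamma$ is automatic for small perturbations since the root multiplicity stratum is stable. The sign pattern $\sigma^0$ must also be preserved; by Lemmas~\ref{lmnot0} and~\ref{lmnot0bis} only $a_1,a_4,a_5$ can vanish at $P_0$, and Condition~B permits at most one to do so. In the generic subcase where none vanishes, the sign inequalities are strict, so any of the three $a_7$-increasing perturbations yields the desired contradiction.

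The delicate subcase is when exactly one $a_j\in\{a_1,a_4,a_5\}$ vanishes at $P_0$. Here the admissible perturbations form a half-space cut out by requiring $\langle\nabla a_j,\vec v\rangle$ to have the appropriate sign so that $a_j$ stays inside $\sigma^0$. By Farkas' alternative, an admissible $a_7$-increasing direction fails to exist only if $\nabla a_7$ is a suitably-signed scalar multiple of $\nabla a_j$ in $(p,r_1,r_2)$-space. The main obstacle is verifying that this proportionality is impossible; I would handle it by case analysis over the three choices of vanishing coefficient and the three multiplicity patterns, comparing the signed components of $\nabla a_7$ recorded above against explicit formulas for $\partial a_j/\partial p$ and $\partial a_j/\partial r_i$ obtained from the expansion of $(x+r_1)^{m_1}(x+r_2)^{m_2}$ (whose coefficients are all strictly positive by Lemma~\ref{lmhyp}). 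The strict inequalities $p,q,r_i>0$, $p\neq q$, $r_1\neq r_2$ should then suffice to preclude the proportionality in every sub-case, closing the argument.
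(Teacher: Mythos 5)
Your parametrization, the elimination of $q$ via $a_8=-1$, and the Newton-identity computation giving $\partial a_7/\partial p=2(q-p)$ and $\partial a_7/\partial r_i=-m_i(q+r_i)<0$ are all correct, and the generic subcase (none of $a_1,a_4,a_5$ vanishing) does go through on this basis. But the whole point of Condition B is that one of $a_1,a_4,a_5$ may vanish, and there your argument stops exactly where the real work begins. You correctly reduce that subcase, via the Farkas alternative, to showing that $\nabla a_7$ is not a suitably signed multiple of $\nabla a_j$ on the slice $a_8=-1$ --- equivalently, that the $3\times 4$ Jacobian $\partial(a_8,a_7,a_j)/\partial(\mbox{roots})$ has rank $3$ --- but you do not prove this: you announce a nine-fold case analysis (three vanishing coefficients times three multiplicity patterns) and assert that the strict inequalities ``should suffice.'' That non-proportionality \emph{is} the content of the lemma, and it is not a routine consequence of positivity of the roots; the raw expressions for $\partial a_j/\partial p$ and $\partial a_j/\partial r_i$ for $j\in\{1,4,5\}$ are polynomials whose signs are not controlled term by term.

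The paper closes precisely this gap with a structural argument worth knowing. Writing $P=(x-u)^m(x-v)^n(x-w)^p(x-t)^q$, the columns of the Jacobian are (up to nonzero constants) the coefficient vectors of $P/(x-u),\ldots,P/(x-t)$; successive divided differences replace these generators by $x^sQ$, $0\le s\le 3$, where $Q:=P/\bigl((x-u)(x-v)(x-w)(x-t)\bigr)$ is a quintic with only real, nonzero roots. In these coordinates the rank-$3$ condition reduces to the statement that two specific \emph{consecutive} coefficients of $Q$ do not vanish simultaneously, which is exactly what part (1) of Lemma~\ref{lmhyp} guarantees. Without this input (or an honestly completed version of your case analysis) your proof is incomplete at its decisive step. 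A secondary point: to stay in $\bar\Gamma$ you must control the sign of the vanishing $a_j$ beyond first order; the clean way is to move along the level set $\{a_8=-1,\ a_j=\mathrm{const}\}$, which is what the full-rank condition licenses via the implicit function theorem, rather than merely choosing a direction with $\langle\nabla a_j,v\rangle$ of the right sign.
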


Therefore an $a_7$-maximal polynomial in $H$ (if any) must satisfy 
Conditions A and~C.

\begin{lm}\label{lmH5}
The set $H$ contains no $a_7$-maximal 
polynomial having exactly five distinct real roots.
\end{lm}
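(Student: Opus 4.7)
Suppose for contradiction that $P\in H$ is $a_7$-maximal with exactly five distinct real roots. By Condition A we write $P=(x-w)(x-g)^{2}Q(x)$ with $w,g>0$, $w\neq g$, and $Q$ monic of degree $6$ with three distinct negative roots whose multiplicities form one of the partitions $\{4,1,1\}$, $\{3,2,1\}$, or $\{2,2,2\}$.

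The central perturbation is
\[
P_\epsilon(x) \;:=\; P(x) + \epsilon^{2}\,R(x), \qquad R(x) \;:=\; P(x)/(x-g)^{2},
\]
obtained by substituting $(x-g)^2+\epsilon^2$ for $(x-g)^2$: the double positive root becomes the complex conjugate pair $g\pm i\epsilon$, while every other root is preserved. Since $R$ is monic of degree $7$, one has $a_7(P_\epsilon)=a_7(P)+\epsilon^2$. For $\epsilon>0$ small $P_\epsilon\in U$, so $P_\epsilon\in\bar\Gamma$ as soon as its remaining coefficients respect the nonstrict form of $\sigma^0$; any such $P_\epsilon$ contradicts $a_7$-maximality.

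A Lagrange-multiplier computation on the four-dimensional family of admissible root configurations (constrained by $a_8=-1$) shows that $a_7$ cannot attain an interior maximum: its gradient in root coordinates is proportional to $(-m_i r_i)$, which cannot align with the constraint gradient $(m_i)$ because the $r_i$ are distinct. Hence some $a_j(P)$ must vanish, and by Lemmas~\ref{lmnot0}, \ref{lmnot0bis} and Remark~\ref{rem2lem} the vanishing pattern lies in $\{a_1=0\}$, $\{a_4=0\}$, $\{a_5=0\}$, $\{a_1=a_4=0\}$, or $\{a_1=a_5=0\}$. Writing $R(x)=(x-w)Q(x)$ with $Q(x)=\sum q_j x^{j}$, $q_j>0$, one has $r_j=q_{j-1}-wq_j$; since $R$ is hyperbolic with one positive root, Lemma~\ref{lmhyp} forces the sign sequence $(r_0,\ldots,r_7)$ to have exactly one change, at a single threshold index $k$. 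Several of the vanishing patterns, for the appropriate values of $k$, are already handled by the principal perturbation alone.

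The main obstacle is the residual cases --- most notably $\{a_1=a_5=0\}$, where the single-threshold structure of $R$ rules out $r_1\geq 0$ and $r_5\leq 0$ simultaneously. These I would handle by combining $P_\epsilon$ with auxiliary second-order perturbations
\[
P \;\longmapsto\; P - \tau^{2}\,P(x)/(x+t)^{2},
\]
each splitting a multiple negative root $-t$ of multiplicity $m\geq 2$ into the distinct real pair $-t\pm\tau$ (so the polynomial remains hyperbolic), together with first-order displacements of the root positions constrained by $\sum m_i\delta_i=0$. Choosing $\epsilon^{2}$ to dominate the auxiliary parameters preserves $\Delta a_7>0$, while the auxiliary contributions furnish the freedom to steer the vanishing coefficients in the sign direction required by $\sigma^{0}$. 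The verification reduces to a finite case check --- one per multiplicity partition on the negative roots and per residual vanishing pattern --- in which the coefficients of each $P(x)/(x+t)^{2}$ at the critical indices are computed explicitly and the resulting linear system is shown to admit a solution meeting every nonstrict sign condition. Since the number of available perturbation directions (at least five: four tangent to the family with the prescribed multiplicity profile, plus the splitting of $g$, plus one splitting per multiple negative root) comfortably exceeds the number of active sign constraints (at most two), this amounts to ruling out degeneracies in the explicit coefficient formulas.
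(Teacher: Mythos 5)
Your central perturbation $P\mapsto\bigl((x-g)^2+\epsilon^2\bigr)P/(x-g)^2$ is a genuinely different and efficient device for the generic situation: it fixes $a_8$, raises $a_7$ by exactly $\epsilon^2$, and turns the double positive root into a complex pair, so when all coefficients of $P$ are nonzero it yields the contradiction with $a_7$-maximality at once (the paper instead runs a Jacobian rank argument in the root coordinates throughout). Your Lagrange-multiplier remark is also essentially correct once one writes $\partial a_7/\partial r_i=m_i(e_1-r_i)$ and $\partial a_8/\partial r_i=-m_i$: proportionality would force all roots to coincide.

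The gap is in the residual cases, which are the actual substance of the lemma. When $a_1=0$ together with $a_4=0$ or $a_5=0$ (not excluded by Lemma~\ref{lmH4bis}, since here there are three distinct negative roots), and also in the single-vanishing cases where the relevant coefficient $r_j$ of $P/(x-g)^2$ has the wrong sign, what must be proved is that the map from your perturbation directions to $(a_8,a_7,a_j,a_1)$ has full rank; counting directions against constraints cannot establish this, because linear dependence can occur no matter how many directions are available. The paper's proof shows that this is not a phantom worry: writing $P_{u,v,w,t,h}=x^4+ax^3+bx^2+cx+d$ for the quotient of $P$ by one copy of each distinct root, the relevant $4\times 5$ Jacobian has rank $<4$ precisely when $a=c=0$ (for $j=5$) or when $b=0$ and $d=ac$ (for $j=4$), and ruling these loci out requires separate explicit computations for each multiplicity partition $(4,1,1)$, $(3,2,1)$, $(2,2,2)$ of the negative roots --- for instance, in the partition $(2,2,2)$ the system $a=c=0$ \emph{does} have solutions, which are discarded only because they force $v$ or $w$ to be negative. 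Your proposal defers exactly this verification (``the resulting linear system is shown to admit a solution\ldots'') and replaces it with the assertion that five directions ``comfortably exceed'' two active constraints; since the target $(a_8,a_7,a_{j_1},a_{j_2})$ is four-dimensional and the degenerate loci above are nonempty as algebraic conditions, it is the nondegeneracy, not the count, that carries the proof. As written, the argument is a plan rather than a proof of the lemma.
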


\begin{lm}\label{lmH6}
The set $H$ contains no $a_7$-maximal 
polynomial having at least six distinct real roots.
\end{lm}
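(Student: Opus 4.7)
Assume, for contradiction, that $P_0\in H$ is $a_7$-maximal and has at least six distinct real roots. By Condition~A and Lemma~\ref{lmnot0bis}, $P_0=(x-u)(x-v)^{2}Q(x)$ with $u\neq v$ positive and $Q$ monic of degree~$6$ having only negative roots of total multiplicity~$6$; writing the distinct negative roots as $-b_1,\ldots,-b_k$ ($b_i>0$) with multiplicities $m_i$, the hypothesis forces $k\geq 4$. The plan is to exhibit a perturbation $P_\eta\in\bar\Gamma$ with $a_7(P_\eta)>a_7(P_0)$, contradicting maximality.

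The first candidate is the \emph{unmerging} of the double positive root $v$:
\[
P_\eta:=P_0+\eta\,(x-u)Q(x)=(x-u)Q(x)\bigl[(x-v)^{2}+\eta\bigr],\qquad \eta>0.
\]
Since $Q$ is an explicit factor of $P_\eta$, the negative roots of $P_0$ (with their multiplicities) are preserved; the factor $(x-v)^{2}+\eta$ contributes a conjugate pair $v\pm i\sqrt\eta$ of positive real part, consistent with Lemma~\ref{lmnotexist}. Hence $P_\eta$ lies in $\bar\Gamma$ for every $\eta>0$ for which the sign pattern is preserved. Because $(x-u)Q$ is monic of degree~$7$, one has $\Delta a_8=0$ and $\Delta a_7=\eta>0$. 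For $j\in\{0,2,3,6,7\}$ the coefficient $a_j(P_0)$ is strictly signed (Lemmas~\ref{lmnot0}, \ref{lmnot0bis}), so sign preservation is automatic for small $\eta$; the only possible obstruction is at an index $j\in\{1,4,5\}$ where $a_j(P_0)=0$, which requires $r_j:=q_{j-1}-uq_j$ (the $j$-th coefficient of $(x-u)Q$) to have the Descartes sign $\sigma^0(j)$.

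When the direct unmerging is obstructed, I would combine it with root-moving perturbations $-b_i\mapsto-b_i-\epsilon_i$, $u\mapsto u+\delta_u$, $v\mapsto v+\delta_v$ that preserve the multiplicity structure. Together with the $\eta$-direction this spans a $(k+3)$-parameter family of perturbations of $P_0$ inside $\bar\Gamma$. Imposing $\Delta a_8=0$ and (by Lemma~\ref{lmnot0}) at most two one-sided sign conditions from the vanishing coefficients among $\{a_1,a_4,a_5\}$ gives at most three linear constraints, leaving a feasible subspace of dimension $\geq k\geq 4$. A direct computation shows that on this subspace
\[
\Delta a_7=\eta-\Bigl(\sum_i m_ib_i\epsilon_i+u\delta_u+2v\delta_v\Bigr).
\]
The existence of a direction with $\Delta a_7>0$ reduces to showing that $\Delta a_7$ is not a linear combination of the active constraint functionals, which in turn reduces to the nonvanishing of a generalized Vandermonde-type determinant in the pairwise distinct coordinates $u,v,b_1,\ldots,b_k$.

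The main obstacle is carrying out this linear-algebra verification in every sub-configuration (which coefficients among $a_1,a_4,a_5$ vanish, combined with which negative multiplicities $m_i\geq 2$ occur). In the tightest cases -- two vanishing coefficients from $\{a_1,a_4,a_5\}$ coupled with a triple or two double negative roots -- one must check that the matrix whose rows are the constraint functionals and the $\Delta a_7$-direction has maximal rank; this reduces by row reduction to a Vandermonde-type determinant, which does not vanish because the root coordinates are pairwise distinct.
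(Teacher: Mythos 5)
Your overall strategy is the same as the paper's: perturb the root configuration so as to keep $a_8$ and the vanishing coefficients among $a_1,a_4,a_5$ under control while strictly increasing $a_7$, and reduce the existence of such a direction to a full-rank (linear-independence) condition on a Jacobian. However, you stop exactly at the decisive point. The claim that the relevant matrix ``reduces by row reduction to a Vandermonde-type determinant, which does not vanish because the root coordinates are pairwise distinct'' is both unproved and, more importantly, not the correct mechanism. The columns of the Jacobian in the root-moving directions are (up to nonzero factors) the coefficient vectors of the polynomials $P/(x-\zeta_i)$ restricted to the rows $x^8,x^7,x^j,x^1$; by the divided-difference argument used in the paper's proofs of Lemmas~\ref{lmH4ter}--\ref{lmH6}, their span equals that of $x^sP_{\zeta_1,\dots,\zeta_6}$, $0\le s\le 5$, where $P_{\zeta_1,\dots,\zeta_6}=x^3+ax^2+bx+c$ is the degree-$3$ cofactor. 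After this reduction the rank question is governed entirely by the coefficients $a,b,c$ of that cubic: e.g.\ for $j=5$ the rank drops precisely when $b=c=0$, and for $j=4$ when $b=0$ and $ac=0$. Pairwise distinctness of $u,v,b_1,\dots,b_k$ says nothing about whether these degeneracies occur; what rules them out is Lemma~\ref{lmhyp} (a hyperbolic polynomial with no root at $0$ has no two consecutive vanishing coefficients, and a vanishing coefficient is flanked by coefficients of opposite signs). Without that input your determinant could perfectly well vanish, so the gap is genuine and not merely a deferred computation.

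Two smaller remarks. First, your unmerging direction $\eta\mapsto P_0+\eta\,(x-u)Q$ is a legitimate extra tangent direction (it is not in the span of the root-moving ones, since $P_0/(x-v)^2$ is not divisible by the cofactor cubic), but it does not dispense with the rank verification, and the paper gets by without it. Second, your dimension count (``feasible subspace of dimension $\ge k\ge 4$'') proves nothing by itself, as you acknowledge: a large kernel of the constraint map is compatible with $\Delta a_7$ vanishing identically on it; only the independence of the $a_7$-row from the constraint rows settles the matter, and that is precisely the step you have not carried out.
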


Hence the set $H$ contains no $a_7$-maximal polynomial at all. It follows from 
Lemma~\ref{lm24} that there is no such polynomial in $\bar{\Gamma}$. 
Hence~$\bar{\Gamma}=\emptyset$.

\section{Proofs of Lemmas~\protect\ref{lmhyp}, 
\protect\ref{lm24}, 
\protect\ref{lmH2}, \protect\ref{lmHtriple}, 
\protect\ref{lmH3} and \protect\ref{lmH3bis}
\protect\label{prlm}}

\begin{proof}[Proof of Lemma~\ref{lmhyp}:]
Part (1). Suppose that a hyperbolic polynomial $V$ 
with two or more vanishing coefficients 
exists. If $V$ is degree $d$ hyperbolic, then $V^{(k)}$ is also hyperbolic 
for $1\leq k<d$. Therefore we can assume that $V$ is of the form 
$x^{\ell}L+c$, where $\deg L=d-\ell$, $\ell \geq 3$, 
$L(0)\neq 0$ and $c=V(0)\neq 0$. 
If $V$ is hyperbolic and 
$V(0)\neq 0$, 
then such is also $W:=x^dV(1/x)=cx^d+x^{d-\ell}L(1/x)$ 
and also $W^{(d-\ell )}$ which 
is of the form $ax^{\ell}+b$, $a\neq 0\neq b$. However given that $\ell \geq 3$, 
this polynomial is not hyperbolic. 

For the proof of part (2) we use exactly the same reasoning, but with 
$\ell =2$. The polynomial $ax^2+b$, $a\neq 0\neq b$, is hyperbolic if and 
only if $ab<0$.

To prove part (3) we consider the sequence of coefficients of  
$V:=\sum _{j=0}^dv_jx^j$, $v_0\neq 0\neq v_d$. 
Set $\Phi :=\sharp \{ k|v_k\neq 0\neq v_{k-1},v_kv_{k-1}<0\}$, 
$\Psi :=\sharp \{ k|v_k\neq 0\neq v_{k-1},v_kv_{k-1}>0\}$ and 
$\Lambda :=\sharp \{ k|v_k=0\}$. Then $\Phi +\Psi +2\Lambda =d$. By Descartes' 
rule of signs the number of 
positive (of negative) roots of $V$ is $pos_V\leq \Phi +\Lambda$ 
(resp. $neg_V\leq \Psi +\Lambda$). As $pos_V+neg_V=d$, one must have 
$pos_V=\Phi +\Lambda$ and $neg_V=\Psi +\Lambda$. It remains to notice that 
$\Phi +\Lambda$ is the number of sign changes in the sequence of coefficients 
of $V$ (and $\Psi +\Lambda$ of $V(-x)$), see part (2) of the lemma.

\end{proof}

\begin{proof}[Proof of Lemma~\ref{lm24}:] Part (1). 
A polynomial of $U_{2,2}$ or 
$U_4$ respectively is 
representable in the form:

$$P^{\dagger}:=(x+u)^2(x+v)^2S\Delta ~~~\, \, {\rm and}~~~\, \, 
P^*:=(x+u)^4S\Delta ~,$$
where $\Delta :=(x^2-\xi x+\eta )(x-w)$ and $S:=x^2+Ax+B$. 
All coefficients $u$, $v$, $w$, $\xi$, $\eta$, $A$, $B$ 
are positive and $\xi ^2-4\eta <0$ (see Lemma~\ref{lmnotexist}); for 
$A$ and $B$ this follows from the fact that 
all roots of $P^{\dagger}/\Delta$ 
and $P^*/\Delta$ are negative. (The roots of $x^2+Ax+B$ 
are not necessarily different from $-u$ and $-v$.) We consider 
the two Jacobian matrices 

$$J_1:=(\partial (a_{8},a_7,a_1,a_4)/\partial (\xi ,\eta ,w,u))~~~{\rm and}~~~
J_2:=(\partial (a_{8},a_7,a_1,a_5)/\partial (\xi ,\eta ,w,u))~.$$
In the case of $P^{\dagger}$ their determinants equal 

$$\begin{array}{ccll}
\det J_1&=& (A^2u^2v+2A^2uv^2+2Au^2v^2+Auv^3+2ABu^2+5ABuv&\\ &&
+2ABv^2+3Bu^2v+2Buv^2+Bv^3+2B^2u+B^2v) \Pi~,\\ \\
\det J_2&=&(A^2uv+Au^2v+2Auv^2+2ABu&\\ &&
+ABv+2Bu^2+4Buv+2Bv^2) \Pi~,\end{array}$$ 
where $\Pi :=-2v(w+u)(-\eta -w^2+w\xi )(\xi u+\eta +u^2)$. 

These determinants are nonzero. Indeed, each of the factors is either a sum of 
positive terms or equals 
$-\eta -w^2+w\xi <-\xi ^2/4-w^2+w\xi =-(\xi /2-w)^2\leq 0$. Thus one can choose 
values of $(\xi ,\eta ,w,v)$ close to the initial one ($u$, $A$ and $B$ remain fixed) to obtain any values of $(a_{8},a_7,a_1,a_4)$ or 
$(a_{8},a_7,a_1,a_5)$ close to the initial one. In particular, with $a_{8}=-1$, 
$a_1=a_4=0$ or $a_{8}=-1$, $a_1=a_5=0$ while $a_7$ can have values larger than 
the initial one. Hence this is not an $a_7$-maximal polynomial. (If 
the change of the value of $(\xi ,\eta ,w,v)$ is small enough, the values 
of the coefficients $a_j$, $j=0$, $2$, $3$, $5$ or $4$ and $6$ 
can change, but their signs remain the same.) The same 
reasoning is valid for $P^*$ as well in which case one has 
 
$$\begin{array}{ccll}
\det J_1&=&(3A^2u^2+3Au^3+9ABu+6Bu^2+3B^2)M~,\\ \\
\det J_2&=&(A^2u+3Au^2+3AB+8Bu)M~,\end{array}$$ 
with $M:=-4u^2(w+u)(-\eta -w^2+w\xi )(\xi u+\eta +u^2)$. 

To prove part (2), we observe that if the triple root of $P\in U_3$ 
is at $-u<0$, then in case when $P$ is increasing (resp. decreasing) 
in a neighbourhood of $-u$ 
the polynomial $P-\varepsilon x^2(x+u)$ (resp. $P+\varepsilon x^2(x+u)$), 
where $\varepsilon >0$ 
is small enough, has three simple roots close to $-u$; it belongs to 
$\bar{\Gamma}$, its coefficients $a_j$, $2\neq j\neq 3$, are the same as 
the ones of $P$, the signs of $a_2$ and $a_3$ are also the same. 

For the proof of part (3), we observe first that 1) for $x<0$ 
the polynomial $P$ has three maxima and three minima and 
2) for $x>0$ one of the following 
three things holds true: either $P'>0$, or there is a double 
positive root $\gamma$ of 
$P'$, or $P'$ has two positive roots $\gamma _1<\gamma _2$ 
(they are both either smaller than  
or greater than the positive root of $P$). Suppose first that $P\in U_0$. 
Consider the family of polynomials $P-t$, $t\geq 0$. Denote by $t_0$ 
the smallest value of $t$ for which one of the three things happens: either  
$P-t$ has a double negative root $v$ (hence a local maximum), or 
$P-t$ has a triple 
positive root $\gamma$ or $P-t$ has a double and a simple positive roots 
(the double one is at $\gamma _1$ or $\gamma _2$). 
In the second and third cases 
one has $P-t_0\in H$. In the first case, if $P-t_0$ has another double 
negative root, then $P-t_0\in U_{2,2}$ and we are done. 
If not, then consider the family of 
polynomials 

$$P_s:=P-t_0-s(x^2-v^2)^2(x^2+v^2)=P-t_0-s(x^6-v^2x^4-x^2v^4+v^6)~~~,~~~s\geq 0~.$$
The polynomial $-(x^6-v^2x^4-x^2v^4+v^6)$ has double real roots at $\pm v$ 
and a complex conjugate pair. It has the same signs of the 
coefficients of 
$x^6$, $x^4$ and $1$ as $P-t_0$ and $P$. The rest of the coefficients of 
$P-t_0$ and $P_s$ are the same. As $s$ increases, the value of $P_s$ 
for every $x\neq \pm v$ decreases. So for some $s=s_0>0$ for the first time 
one has either $P_s\in U_{2,2}$ (another local maximum of $P_s$ 
becomes a double negative root) or $P_s\in H$ ($P_s$ has positive roots 
of total multiplicity $3$, but not three simple ones). This proves part (3) for 
$P\in U_0$.

If $P\in U_2$ and the double negative root is a local minimum, then 
the proof of part (3) is just the same. If this is a local maximum, 
then one skips the construction of the family $P-t$ and starts constructing 
the family $P_s$ directly. 
\end{proof}

\begin{proof}[Proof of Lemma~\ref{lmH2}:]
Suppose that such a polynomial exists. Then it must be of the form 
$P:=(x+u)^6(x-w)^3$, $u>0$, $w>0$. The conditions $a_{8}=-1$ and  
$a_1>0$ read:

$$6u-3w=-1~~~\, \, {\rm and}~~~\, \, 3u^5w^2(u-2w)>0~.$$
In the plane of the variables $(u,w)$ the domain $\{ u>0, w>0, u-2w>0\}$ 
does not intersect the line $6u-3w=-1$ which proves the lemma.
\end{proof}

\begin{proof}[Proof of Lemma~\ref{lmHtriple}:]

Represent the polynomial in the form $P=(x+u_1)\cdots (x+u_6)(x-\xi )^3$, where 
$u_j>0$ and $\xi >0$. The numbers $u_j$ are not necessarily distinct. 
The coefficient $a_{8}$ then equals $u_1+\cdots +u_6-3\xi$. The condition 
$a_{8}=-1$ implies $\xi =\xi _*:=(u_1+\cdots +u_6+1)/3$. Denote by 
$\tilde{a}_1$ the coefficient $a_1$ 
expressed as a function of $(u_1,\ldots ,u_6,\xi )$. Using 
computer algebra (say, MAPLE) one finds $27\tilde{a}_1|_{\xi =\xi _*}$:

$$27\tilde{a}_1|_{\xi =\xi _*}=-(-3u_1\cdots u_6+X+Y)(u_1+\cdots +u_6+1)^2~,$$
where $Y:=u_1\cdots u_6(1/u_1+\cdots +1/u_6)$ and 
$X:=u_1\cdots u_6\sum _{1\leq i,j\leq 6,i\neq j}u_i/u_j$ (the sum $X$ contains $30$ 
terms). We show that $a_1<0$ which by contradiction proves the lemma. 
The factor 
$(u_1+\cdots +u_6+1)^2$ is positive. The factor $\Xi :=-3u_1\cdots u_6+X+Y$ 
contains a single monomial with a negative coefficient, namely, 
$-3u_1\cdots u_6$. Consider the sum 

$$\begin{array}{clcc}
&-3u_1\cdots u_6+u_1^2u_3u_4u_5u_6+u_2^2u_3u_4u_5u_6+u_1u_3^2u_4u_5u_6+u_2u_3u_4^2u_3u_5u_6&&\\ 
+&u_1u_3u_4u_5^2u_6+u_2u_3u_4u_5u_6^2&&\\ \\
=&u_3u_4u_5u_6((u_1-u_2)^2+u_1u_2)+u_3u_4u_5u_6((u_1-u_2)^2+u_1u_2)&&\\
+&u_3u_4u_5u_6((u_1-u_2)^2+u_1u_2)&>&0\end{array}$$
(the second and third monomials are in $X$). Hence $\Xi$ is representable 
as a sum of positive quantities, so $\Xi >0$ and $a_1<0$. 
\end{proof}

\begin{proof}[Proof of Lemma~\ref{lmH3}:]
Suppose that such a polynomial exists. Then it must be of the  
form $(x+u)^6(x-w)^2(x-\xi )$, 
where $u>0$, $w>0$, $\xi >0$, $w\neq \xi$. 
One checks numerically (say, using MAPLE), 
for each of the two systems of algebraic 
equations 
$a_{8}=-1$, $a_1=0$, $a_4=0$ and $a_{8}=-1$, $a_1=0$, $a_5=0$, that each 
real solution $(u, w, \xi )$ or $(u,v,w)$ contains a nonpositive component. 
\end{proof}

\begin{proof}[Proof of Lemma~\ref{lmH3bis}:] 
Making use of Condition A formulated after Lemma~\ref{lmHtriple}, 
we consider only polynomials of the form 
$(x+u)^6(x-w)^2(x-\xi )$. 
Consider the Jacobian matrix 

$$J_1^*:=(\partial (a_{8},a_7,a_1)/\partial (u,w,\xi ))~.$$ 
Its determinant equals 
$-12u^4(u+w)(u-5w)(\xi -w)(k+u)$. All factors except $u-5w$ are nonzero. 
Thus for $u\neq 5w$, one has $\det J_1\neq 0$, so one 
can fix the values of $a_{8}$ and $a_1$ 
and vary the one of $a_7$ arbitrarily close to the initial one by choosing 
suitable values of $u$, $w$ and $\xi$. Hence the polynomial is not 
$a_7$-maximal. For $u=5w$, one has $a_3=-2500w^5(\xi + 5w )<0$ which is 
impossible. Hence 
there exist no $a_7$-maximal polynomials 
which satisfy only the condition $a_1=0$ or none of the conditions $a_1=0$, 
$a_4=0$ or $a_5=0$. To see that there exist no such polynomials satisfying only 
the condition $a_4=0$ or $a_5=0$ one can consider the matrices 
$J_4^*:=(\partial (a_{8},a_7,a_4)/\partial (u,w,\xi ))$ and 
$J_5^*:=(\partial (a_{8},a_7,a_5)/\partial (u,w,\xi ))$. Their determinants 
equal respectively 

$$-60u(u+w)(2u-w)(\xi -w)(\xi +u)~~~\, {\rm and}~~~\, 
-12u(u+w)(5u-w)(\xi -w)(\xi +u)~.$$
They are nonzero respectively for $2u\neq w$ and $5u\neq w$, in which cases 
in the same way we conclude that the polynomial is not $w_7$-maximal. 
If $u=w/2$, then $a_1=-(1/64)w^7(10\xi - w )$ and $a_{8}=w - \xi$. 
As $a_1>0$ and $a_{8}<0$, one has $w>10\xi $ and $\xi >w>10\xi$ 
which is a contradiction. If $w=5u$, then $a_6=20u^2(u+\xi )>0$ which is 
again a contradiction.

%
\end{proof}

\section{Proof of Lemma~\protect\ref{lmH4bis}
\protect\label{prlmbis}}

The multiplicities of the negative roots of $P$ define the following a priori 
possible cases: 

$${\rm A)}~~~(5,1)~,~~~{\rm B)}~~~(4,2)~~~
{\rm and~C)}~~~(3,3)~.$$ 
In all 
of them the proof is carried out simultaneously for the two possibilities 
$\{ a_1=a_4=0\}$ and $\{ a_1=a_5=0\}$. In order to simplify the proof we fix 
one of the roots to be equal to $-1$ (this can be achieved by a change 
$x\mapsto \beta x$, $\beta >0$, followed by $P\mapsto \beta ^{-9}P$). 
This allows to deal with one less  
parameter. By doing so we can no longer require that $a_{8}=-1$, 
but only that $a_{8}<0$.

\begin{proof}[Case A)]
We use the following parametrization: 

$$P=(x+1)^5(sx+1)(tx-1)^2(wx-1)~,~s>0~,~t>0~,~w>0~,~ t\neq w~,$$ 
i.e. the negative roots of $P$ are at $-1$ and $-1/s$ and the positive ones at 
$1/t$ and~$1/w$. 

The condition $a_1=w+2t-s-5=0$ yields $s=w+2t-5$. For $s=w+2t-5$, one has

$$\begin{array}{lcl}
a_3=a_{32}w^2+a_{31}w+a_{30}~,&&a_4=a_{42}w^2+a_{41}w+a_{40}~,\\ \\ 
{\rm where}&&a_{32}=-2t+5~,\\ 
a_{31}=-(2t-5)^2~,&&a_{30}=-2t^3+20t^2-50t+40\\ \\ 
{\rm and}&&a_{42}=t^2-10t+10~,\\ 
a_{41}=2t^3-25t^2+70t-50~,&&a_{40}=
-10t^3+55t^2-100t+45~.\end{array}$$
The coefficient $a_{30}$ has a single real root $6.7245\ldots$ hence $a_{30}<0$ 
for $t>6.7245\ldots$. On the other hand, 

$$a_{32}w^2+a_{31}w=w(-2t+5)(w+2t-5)=w(-2t+5)s$$ 
which is negative for $t>6.7245\ldots$. Thus the inequality $a_3>0$ fails for 
$t>6.7245\ldots$. Observing that $a_{41}=(2t-5)a_{42}$ one can write 

$$a_4=(w+2t-5)wa_{42}+a_{40}=swa_{42}+a_{40}~.$$

The real roots of $a_{42}$ (resp. $a_{40}$) equal 
$1.127\ldots$ and $8.872\ldots$ (resp. $0.662\ldots$). Hence 
for $t\in [1.127\ldots ,8.872\ldots ]$, the inequality $a_4>0$ 
fails. There remains to consider the possibility 
$t\in (0,1.127\ldots )$.

It is to be checked directly that for $s=w+2t-5$, one has

$$a_{8}/t=10t^2w+5tw^2-2t^2-29tw-2w^2+5t+10w=(5t-2)ws+t(5-2t)$$
which is nonnegative (hence $a_{8}<0$ fails) for $t\in [2/5,5/2]$. 
Similarly

$$\begin{array}{ll}
a_6=a_6^*w(w+2t-5)+a_6^{\dagger}=a_6^*ws+a_6^{\dagger}~~,~~&{\rm where}\\ 
a_6^*=10t^2-20t+5~~,~~&a_6^{\dagger}=
-5(t-1)(4t^2-9t+1)~.\end{array}$$
The real roots of $a_6^*$ (resp. $a_6^{\dagger}$) equal 
$1.707\ldots >2/5=0.4$ and $0.293\ldots$ (resp. 
$1 >2/5$, $0.117\ldots$ and 
$2.133\ldots$) hence for $t\in (0,2/5)$ one has 
$a_6^*>0$ and $a_6^{\dagger}>0$, i.e. $a_6>0$ and the equality $a_6=0$  
or the inequality $a_6<0$ is impossible.
\end{proof}

\begin{proof}[Case B)]
We parametrize $P$ as follows: 

$$P=(x+1)^4(Tx^2+Sx-1)^2(wx-1)~,~T>0~,~w>0~.$$
In this case we presume $S$ to be real, not necessarily positive. The factor 
$(Tx^2+Sx-1)^2$ contains the double positive and negative roots of $P$. 

From $a_1=w+2S-4=0$ one finds $S=(4-w)/2$. For $S=(4-w)/2$, one has 

$$\begin{array}{ll}
a_{8}/T=(4w-1)T+4w-w^2~,&\\ 
a_5=a_{52}T^2+a_{51}T+a_{50}~,&{\rm where}\\ 
a_{52}=w-4~,&\\ 
a_{51}=-4w^2+10w-16&{\rm and}\\ 
a_{50}=(3/2)w^3-9w^2+16w-12~.&\end{array}$$
Suppose first that $w>1/4$. The inequality $a_{8}<0$ is equivalent to 

$$T<T_0:=(w^2-4w)/(4w-1)~.$$
As $T>0$, this implies $w>4$.

For $T=T_0$, one obtains $a_5=3C/2(4w-1)^2$, where the numerator 
$C:=6w^5-40w^4+85w^3-54w^2+32w-8$ has a single real root $0.368\ldots$. 
Hence for $w>4$, one has $C>0$ and $a_5|_{T=T_0}>0$. On the other hand, 
$a_{50}=a_5|_{T=0}$ has a single real root $3.703\ldots$, so 
for $w>4$ one has $a_5|_{T=0}>0$. For $w>4$ fixed, and for 
$T\in [0,T_0]$, the value of the derivative 

$$\partial a_5/\partial T=(2w-8)T-4w^2+10w-16$$
is maximal for $T=T_0$; this value equals 

$$-2(7w^3-14w^2+21w-8)/(4w-1)$$
which is negative because the only real root of the numerator is $0.510\ldots$. 
Thus $\partial a_5/\partial T<0$ and $a_5$ is minimal for $T=T_0$. 
Hence the inequality $a_5<0$ fails for $w>1/4$. For $w=1/4$ one has 
$a_{8}=15/16>0$. 

So suppose that $w\in (0,1/4)$. In this case the condition $a_{8}<0$ implies 
$T>T_0$. For $T=T_0$ one gets 

$$a_4=3D/2(4w-1)^2~~~,~~~{\rm where}~~~D:=8w^5-32w^4+54w^3-85w^2+40w-6$$
has a single real root $2.719\ldots$. Hence for $w\in (0,1/4)$ one has 
$D<0$ and $a_4|_{T=T_0}<0$. The derivative 
$\partial a_4/\partial T=-w^2-2T-4$ being negative one has $a_4<0$ for 
$w\in (0,1/4)$, i.e. the inequality $a_4>0$ fails.
\end{proof}

\begin{proof}[Case C)]
We set 

$$P:=(x+1)^3(sx+1)^3(tx-1)^2(wx-1)~,~s>0~,~t>0~,~w>0~,~ t\neq w~.$$
The condition $a_1=w+2t-3s-3=0$ implies $s=s_0:=(w+2t-3)/3$. For $s=s_0$, one 
has 
$27a_{8}=t(w+2t-3)^2H^*$, where 
\begin{equation}\label{H}
H^*:=6wt^2-2t^2+3w^2t-5wt+3t+6w-2w^2~.
\end{equation}
We show first that for $s=s_0$, the case $a_1=a_5=0$ is impossible. To fix the ideas, we represent on Fig.~\ref{HA5} the sets $\{H^*=0\}$ (solid curve) and $\{a_5^*=0\}$ (dashed curve), where $a_5^*:=a_5|_{s=s_0}$. Although we need only the nonnegative values of $t$ and $w$, we show these curves also for the negative values of the variables to make things more clear. (The lines $t=2/3$ and $w=1/3$ are asymptotic lines for the set $\{H^*=0\}$). For $t \geq 0$ and $w \geq 0$, the only point, where $H^*=a_5^*=0$, is the point $(0;3)$. However, at this point one has $a_8=0$, i.e. this does not correspond to the required sign pattern.

\begin{figure}[htbp]
\vskip0.5cm
\centerline{\hbox{\includegraphics[scale=0.4]{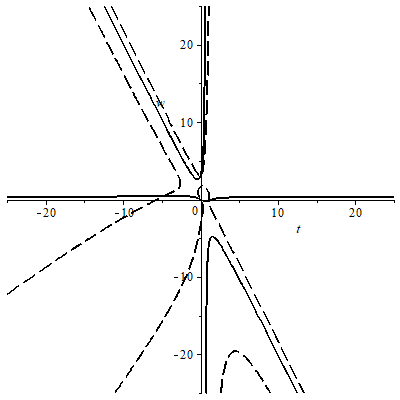}}\hskip1cm \hbox{\includegraphics[scale=0.4]{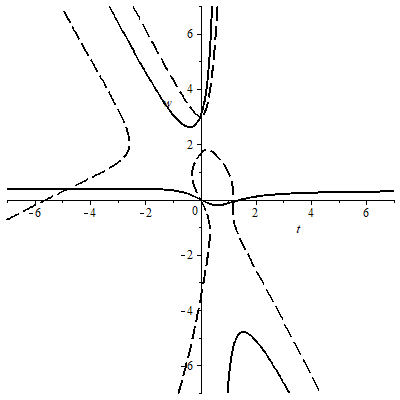}}}
    \caption{The sets $\{H^*=0\}$ (solid curve) and $\{a_5^*=0\}$ (dashed curve), with 3 and 4 connected components respectively. }
\label{HA5}
\end{figure}



\begin{lm}\label{lmH} 
(1) For $(t,w)\in \Omega_1 \cup \Omega_2$, where $\Omega_1 = [3/2,\infty )\times [1/3,\infty )$ and $\Omega_2= [0,3/2]\times [0,3]$, one has $H^* \geq0$. 

(2) For $(t,w)\in \Omega_3:=[3/2,\infty )\times [0,1/3]$, one has $a_5^*<0$.

(3) For $(t,w)\in \Omega_4:=[0,3/2]\times [3, \infty)$, the two conditions $H^*<0$ and $a_5^*=0$ do not hold simultaneously. 

\end{lm}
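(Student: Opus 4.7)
The plan is to treat $H^*$ and $a_5^*$ as polynomials in $t$ whose coefficients depend on $w$, and verify each assertion on its own rectangle. For part (1), rewrite
\[
H^*(t,w) = (6w-2)\,t^{2} + (3w^{2}-5w+3)\,t + 2w(3-w).
\]
The linear coefficient in $t$ is a quadratic in $w$ with negative discriminant $25-36<0$, hence strictly positive for all real $w$; the constant term $2w(3-w)$ is $\geq 0$ on $[0,3]$, and the leading coefficient $6w-2$ is $\geq 0$ for $w\geq 1/3$. On $\Omega_1$ the parabola in $t$ opens upward and its vertex lies at $t<0$, so $t\mapsto H^*(t,w)$ is increasing on $[0,\infty)$; a direct evaluation gives $H^*(3/2,w) = (5/2)w^{2} + 12w \geq 0$, yielding $H^* \geq 0$ throughout $\Omega_1$. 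On $\Omega_2$, for $w\in[1/3,3]$ all three coefficients are nonnegative, so $H^*\geq 0$ automatically; for $w\in[0,1/3]$ the parabola opens downward, but the endpoint values $H^*(0,w) = 2w(3-w)$ and $H^*(3/2,w) = (5/2)w^{2}+12w$ are both nonnegative, which by the geometry of a downward-opening parabola forces $H^*\geq 0$ on the whole segment $t\in[0,3/2]$.

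For part (2), I would expand $a_5^*$ explicitly as a polynomial in $(t,w)$ and show that on $\Omega_3$ it is strictly negative. Since $w\in[0,1/3]$ is small while $t\geq 3/2$ is bounded away from $0$, one organizes $a_5^*$ by descending powers of $t$; the dominant coefficients should carry a visibly negative sign on this range of $w$, after which the lower-order terms are absorbed by the bound $t\geq 3/2$, in the same spirit as the treatments of Cases A) and B) above.

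For part (3), the task is to exclude simultaneous solutions of $\{H^*<0\}$ and $\{a_5^*=0\}$ inside $\Omega_4$. Viewing $H^*$ as a quadratic in $t$ for fixed $w>3$, the set $\{H^*<0\}\cap\Omega_4$ is the slab $\{(t,w): w>3,\ 0\leq t<t^{*}(w)\}$, where $t^{*}(w)$ is the unique positive root of $H^*(\cdot,w)$ and satisfies $t^{*}(w)<3/2$ by part (1). The natural tool is the resultant $R(w):=\mathrm{Res}_{t}(H^{*},a_5^*)$: its real roots on $[3,\infty)$ enumerate the $w$-coordinates of any common zero, and for each such $w_0$ one reads off the candidate $t$-values from the quadratic $H^*(\cdot,w_0)=0$ and checks they lie outside the open slab. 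The single intersection point $(t,w)=(0,3)$ visible in Figure~\ref{HA5} is automatically harmless, because the identity $27a_8=t(w+2t-3)^{2}H^*$ gives $a_8=0$ there, contradicting $\sigma^0$.

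The main obstacle is part (3): the resultant $R(w)$ has moderately high degree, so isolating its real zeros on $[3,\infty)$ and checking that every candidate $(t_0,w_0)$ falls outside the open slab is likely to require a Sturm-sequence argument or an explicit factorization that pulls out the factor $(w-3)$ and exhibits the remaining factor as nonvanishing on $[3,\infty)$. This is the step where computer-algebra assistance (MAPLE, in keeping with the rest of the paper) is unavoidable.
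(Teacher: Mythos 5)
Your part (1) is correct and is in fact a cleaner route than the paper's: the paper treats $H^*$ as a quadratic in $w$, computes the discriminant $\Delta_w$ and runs a case analysis on $t$, whereas you work with the $t$-parabola, observe that $3w^2-5w+3>0$ for all real $w$ and $2w(3-w)\geq 0$ on $[0,3]$, and reduce everything to the single evaluation $H^*(3/2,w)=(5/2)w^2+12w\geq 0$ combined with monotonicity on $[0,\infty)$ when $6w-2>0$ (vertex at $t<0$) and concavity (minimum at an endpoint of $[0,3/2]$) when $6w-2<0$. That part stands on its own.

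Parts (2) and (3) have genuine gaps. For (2) you give only a plan, and the heuristic behind it is false: ordering $a_5^*$ by descending powers of $t$, the coefficient of $t^3$ is $6w^2-66w+84$, which equals $84$ at $w=0$, and the coefficient of $t$ is $-2w^4-12w^3+90w^2-144w+108$, which equals $108$ at $w=0$; so on $\Omega_3$ the coefficients are not ``visibly negative'', and absorbing $+84t^3+108t$ into the negative terms uniformly for $t\geq 3/2$ requires an actual argument (the paper instead computes $\mathrm{Res}(a_5^*,\partial a_5^*/\partial w,w)$, shows its largest real root is $3/2$, and propagates the sign from the sample lines $t=3/2$ and $t=2$). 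For (3) your plan misidentifies the target set: the points to be excluded satisfy $a_5^*=0$ and $H^*<0$ \emph{strictly}, so they are not common zeros of $H^*$ and $a_5^*$, and the real roots of $\mathrm{Res}_t(H^*,a_5^*)$ do not ``enumerate'' them. The resultant is used negatively: since it has no root for $w>3$, the curve $\{a_5^*=0\}$ cannot cross $\{H^*=0\}$ there, so the relative order of the roots of $a_5^*(\cdot,w_0)$ and $H^*(\cdot,w_0)$ is the same for every $w_0>3$ --- but to conclude one must also know that the number and the signs of the real roots of $a_5^*(\cdot,w_0)$ do not change on $(3,\infty)$ (the paper checks that $\mathrm{Res}(a_5^*,\partial a_5^*/\partial t,t)$, the leading coefficient $-8$ and the constant term $3w(w-3)(w^2+2w-6)$ do not vanish there), and then verify the configuration on a single line, e.g. $w=4$, where the unique positive root $t_3$ of $a_5^*$ lies to the right of $t_+$. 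Without the discriminant-in-$t$ control and that one verification, a branch of $\{a_5^*=0\}$ could in principle sit entirely inside the open region $\{H^*<0\}$ without ever meeting its boundary, and your argument would not detect it.
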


Lemma~ \ref{lmH} (which is proved after the proof of Lemma~ \ref{lmH4bis}) implies that in each of the sets $\Omega_j$, $1 \leq j \leq 4$, at least one of the two conditions $H^*<0$ (i.~e. $a_8<0$) and $a_5^*=0$ fails. There remains to notice that $\Omega_1 \cup \Omega_2 \cup  \Omega_3 \cup \Omega_4 = \{ t \geq 0, w \geq 0  \}$. 

Now, we show that for $s=s_0$, the case $a_1=a_4=0$ is impossible. On Fig.~\ref{HA4} we show the sets $\{H^*=0\}$ (solid curve) and $\{a_4^*=0\}$ (dashed curve), where $a_4^*:=a_4|_{s=s_0}$. We use the notation introduced in Lemma~\ref{lmH}. By part (1) of Lemma~\ref{lmH} the case $a_1=a_4=0$ is impossible for $(t,w)\in \Omega_1 \cup \Omega_2$. 

\begin{figure}[htbp]
\vskip0.5cm
\centerline{\hbox{\includegraphics[scale=0.4]{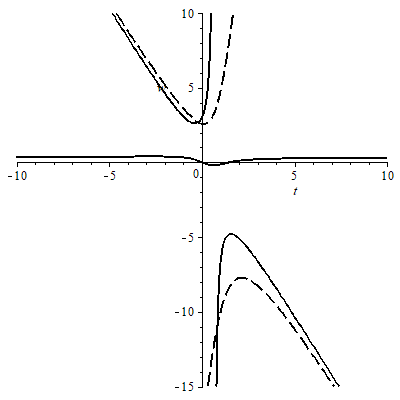}}\hskip1cm \hbox{\includegraphics[scale=0.4]{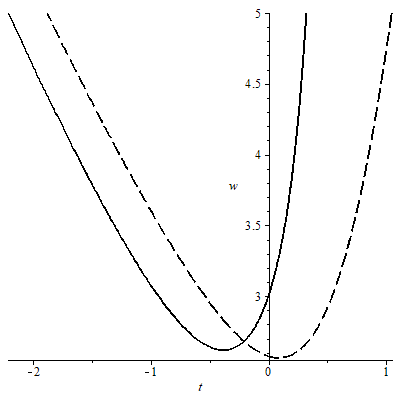}}}
    \caption{The sets $\{H^*=0\}$ (solid curve) and $\{a_4^*=0\}$ (dashed curve), with 3 and 2 connected components respectively. }
\label{HA4}
\end{figure}

\begin{lm}\label{lmHA4} 
(1) For $(t,w)\in \Omega_3$, one has $a_4^*>0$. 

(2) For $(t,w)\in \Omega_4$, the two conditions $H^*<0$ and $a_4^*=0$ do not hold simultaneously. 

\end{lm}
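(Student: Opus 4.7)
The plan is to first compute $a_4^* = a_4|_{s=s_0}$ explicitly as a polynomial in $(t,w)$, by substituting $s=(w+2t-3)/3$ into the expansion of $P=(x+1)^3(sx+1)^3(tx-1)^2(wx-1)$ and extracting the coefficient of $x^4$. This produces a polynomial of moderate bidegree which can then be analysed by the same kind of sign manipulations used in Lemma~\ref{lmH}.

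For part~(1), on $\Omega_3 = [3/2,\infty) \times [0,1/3]$, I would regard $a_4^*$ as a polynomial in $w$ whose coefficients are polynomials in $t$, exploiting the smallness of $w$. Setting $w=0$ gives $a_4^*(t,0)$, which I expect to be positive for all $t \geq 3/2$; this can be verified by factoring or by checking that $a_4^*(t,0)$ has no real roots in $[3/2,\infty)$ together with evaluating at one test point. The contribution of higher powers of $w$ is then controlled by writing $a_4^*(t,w) = a_4^*(t,0) + w\,R(t,w)$ and bounding $|R(t,w)|\,w$ by $a_4^*(t,0)$ on $\Omega_3$, using $w \leq 1/3$; for large $t$ the leading monomial of $a_4^*(t,0)$ will easily dominate the remainder, so one only needs to handle $t$ in a bounded interval, which reduces to a finite sign check.

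For part~(2), on $\Omega_4 = [0,3/2] \times [3,\infty)$, the goal is to show that the curves $\{H^*=0\}$ and $\{a_4^*=0\}$ do not meet. The cleanest route is to form the resultant $\mathrm{Res}_t(H^*,a_4^*) \in \mathbb{R}[w]$ and verify, using MAPLE as elsewhere in the paper, that any real root of it in $[3,\infty)$ corresponds only to values of $t$ outside $[0,3/2]$. An alternative, exploiting that $H^*$ is quadratic in $w$ (see \eqref{H}), is to solve $H^*=0$ for $w=w_\pm(t)$ and substitute into $a_4^*$, reducing to the sign of two one-variable polynomials in $t$ on $[0,3/2]$ subject to the constraint $w_\pm(t) \geq 3$; this constraint by itself may already exclude both branches, which would finish the proof immediately.

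The main obstacle will be the algebraic complexity of $a_4^*$ and of the resultant: both are unwieldy polynomials, and making the estimates in part~(1) sharp enough without computer assistance is likely to be delicate. Part~(2) in particular will almost certainly require computer-algebra-assisted verification of the absence of real roots in specified intervals, in the same spirit as the numerical checks already invoked in the proof of Lemma~\ref{lmH3}. No new conceptual idea beyond the sign-analysis techniques already developed in the paper appears to be needed; the difficulty is one of bookkeeping rather than method.
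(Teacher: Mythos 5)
Your part (2) contains a genuine gap: you have replaced the statement to be proved by a strictly weaker one. The lemma asserts that on $\Omega_4$ the conditions $H^*<0$ and $a_4^*=0$ cannot hold simultaneously, i.e.\ that the curve $\{a_4^*=0\}$ avoids the \emph{open region} $\{H^*<0\}$. You reformulate the goal as ``the curves $\{H^*=0\}$ and $\{a_4^*=0\}$ do not meet'', and both of your suggested routes (the resultant $\mathrm{Res}_t(H^*,a_4^*)$, or substituting $w_\pm(t)$ into $a_4^*$) only address this weaker disjointness statement. Disjointness of the two zero sets is compatible with a whole branch of $\{a_4^*=0\}$ lying entirely inside $\{H^*<0\}$; and note that $\{H^*<0\}$ is genuinely nonempty in $\Omega_4$ (for each $w_0>3$ the quadratic $H^*|_{w=w_0}$ in $t$ has a positive root $t_+$, e.g.\ $t_+=0.22\ldots$ at $w_0=4$, with $H^*<0$ for $0\le t<t_+$), so your hope that the constraint $w_\pm(t)\ge 3$ ``may already exclude both branches'' is false. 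The paper closes exactly this gap: after showing that $\mathrm{Res}(a_4^*,H^*,t)$ has no real roots $\ge 3$, it also computes the discriminant of $a_4^*$ in $t$ to see that the number and signs of the real $t$-roots of $a_4^*|_{w=w_0}$ do not change for $w_0\ge 3$, and then verifies at the single sample value $w_0=4$ the ordering $h_-<A_-<0\le h_+<A_+$, concluding that the unique positive $t$-root of $a_4^*|_{w=w_0}$ always lies where $H^*>0$. Some such localization of the branches of $\{a_4^*=0\}$ relative to the sign of $H^*$ (a sample-point evaluation on each branch, justified by a continuation argument) is indispensable; your sketch omits it.

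For part (1) your route differs from the paper's but is in principle workable: the paper computes the discriminant of $a_4^*$ as a polynomial in $t$, finds that it does not vanish for $w$ in the strip $(-7.72\ldots,\,2.56\ldots)\supset[0,1/3]$, and concludes that $a_4^*|_{w=w_0}$ has no real $t$-roots there because $a_4^*|_{w=0}=-20t^4+66t^3-135t^2+108t-81$ has none; your perturbation-from-$w=0$ estimate would have to be made quantitative uniformly over all $t\ge 3/2$, which is doable but more laborious than the discriminant argument. Beware, however, that $a_4^*|_{w=0}$ has negative leading coefficient and no real roots, so $a_4^*$ is in fact \emph{negative} on $\Omega_3$ (as the paper's own proof states, in contradiction with the sign written in the lemma); what the application in Lemma~\ref{lmH4bis} actually needs is only that $a_4^*\neq 0$ there, so your stated expectation of positivity would lead you to a dead end if pursued literally.
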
   
Thus the couple of conditions $H^*<0$, $a_4^*=0$ fails for $t \geq 0$, $w \geq 0$. This proves Lemma~\ref{lmH4bis}. Lemma~\ref{lmHA4} is proved after Lemma~\ref{lmH} .    

\end{proof}

\begin{proof}[Proof of Lemma~\ref{lmH}]
Part (1). Consider the quantity $H^*$ as a polynomial in the variable $w$: 
$$\begin{array}{lll}
H^*=b_2w^2+b_1w+b_0~,&\rm{where}&b_2=3t-2~,\\ \\
b_1=6t^2-5t+6&\rm{and}&b_0=-2t(t-3/2)~.
\end{array}$$
Its discriminant $\Delta_w:=b_1^2-4b_0b_2=9(2t^2-3t+2)(2t^2+t+2)$ is positive for any real $t$. This is why for $t \neq 2/3$, the polynomial $H^*$ has $2$ real roots; for $t=2/3$, it is a linear polynomial in $w$ and has a single real root $-5/24$. When $H^*$ is considered as a polynomial in the variable $t$, one sets   
\begin{equation}\label{EqHt}
\begin{array}{lll}
H^*:=c_2t^2+c_1t+c_0~,&\rm{where}&c_2=6w-2~,\\ \\
c_1=3w^2-5w+3&\rm{and}&c_0=-2w(w-3)~.
\end{array}
\end{equation}
Its discriminant 

$$\Delta_t:=c_1^2-4c_0c_2=9(w^2+5w+1)(w^2-3w+1)$$
 is negative if and only if $w\in (-4.79\ldots ,0.20\ldots ) \cup (-0.38\ldots ,2,61\ldots )$. One checks directly that $H^*|_{w=1/3}=(5/3)t+16/9$ which is positive for $t \geq 0$. Next, one has $H^*|_{w=0} =b_0$ which is negative for $t > 3/2$. Finally, for $t > 3/2$, the ratio $b_0/b_2$ is negative which means that for $t>3/2$ fixed, the polynomial $H^*$ has one positive and one negative root, so the positive root belongs to the interval $(0,1/3)$ (because $H^*|_{w=1/3}>0$). Hence $H^* \geq 0$ for $(t,w)\in \Omega_1$ and $H^*>0$ for $(t,w)$ in the interior of  $ \Omega_1$.

Suppose now that $(t,w)\in [0,3/2]\times [0,3]$. For $t \in (2/3, 3/2]$ fixed, one has $b_2>0$, $b_1/b_2 >0$ and $b_0/b_2 > 0$ which implies that $H^*$ has two negative roots, and for $(t,w)\in (2/3,3/2]\times [0,3]$, one has $H^*>0$. For $t \in [0, 2/3)$ fixed, one has $b_2<0$, $b_1/b_2 <0$, $b_0/b_2 < 0$ and $H^*$ has a positive and a negative root; given that $b_2<0$, $H^*$ is positive between them. For $w=3$ and $t \geq 0$, one has $H^*=t(16t+15) \geq 0$, with equality only for $t=0$. Therefore $H^*>0$ for $(t,w)\in [0,2/3)\times [0,3]$. And for $t=2/3$, one obtains $H^*=(16/3)w+10/9$ which is positive for $w \geq 0$. 

Part (2). One has 
$$\begin{array}{ccl}
a_5^*&=&-8t^5+8t^4w+6t^3w^2-4t^2w^3-2tw^4-24t^4 \\ \\ 
&&-66t^3w-63t^2w^2-12tw^3+3w^4+84t^3+153t^2w \\ \\ 
&&+90tw^2-3w^3-144t^2-144tw-36w^2+108t+54w ~. 
\end{array}$$
Consider $a_5^*$ as a polynomial in $w$. Set $R_w:=$Res$(a_5^*,\partial a_5^* / \partial w, w) / 2125764$. Then $R_w=(2t-3)R_w^1R_w^2$, where 
$$\begin{array}{ccl}
R_w^1&=&32t^5+16t^4-80t^3+184t^2-142t-63~,\\ \\ 
R_w^2&=&10t^{10}-80t^9+365t^8-928t^7+1564t^6-1788t^5 \\ \\ 
&&+1345t^4-668t^3+208t^2-40t+4 ~. 
\end{array}$$
The real roots of $R_w^1$ (resp. $R_w^2$) equal $-2.56 \ldots$, $-0.30 \ldots$ and $1.18 \ldots$ (resp. $0.34 \ldots$ and $1.16\ldots$). That is, the largest real root of $R_w$ is $3/2$. One has 

$$a_5^*|_{w=0}=-4t(2t^4+6t^3-21t^2+36t-27)~,$$
with real roots equal to $-5.55 \ldots$, $0$ and $1.18 \ldots$. This means that for $t > 3/2$, the signs of the real roots of $a_5^*$ do not change and their number (counted with multiplicity) remains the same. For $t=3/2$ and $t=2$, one has 

$$a_5^*=-30w^3-(45/2)w^2-(243/4) \hspace{3mm}{\rm and}\hspace{3mm} a_5^*=-w^4-43w^3-60w^2-22w-328$$ respectively, which quantities are negative. Hence $a_5^*<0$ for $t \geq 3/2$ from which part (2) follows. 

Part (3). Consider the resultant 
$$\begin{array}{ccll}
R^{\flat}&:=&{\rm Res}(H^*,a_5^*,t)=-52488w(w-3)R^{\sharp}(w^2-w+1)^2~,&{\rm where}\\ \\ 
R^{\sharp}&:=&5w^6-16w^5+40w^4-23w^3+61w^2-16w-2~. 
\end{array}$$
The real roots of $R^{\sharp}$ equal $-0.09 \ldots$ and $0.37 \ldots$; the factor $w^2-w+1$ has no real roots. Thus the largest real root of $R^{\flat}$ equals $3$. For $w=3$, one has 

$$a_5^*=-4t^2(2t^3+15t+90) \leq 0~,$$
with equality if and only if $t=0$. For $w>3$ and $t \geq 0$, the sets \{$H^*=0$\} and \{$a_5^*=0$\} do not intersect (because $R^{\flat}<0$). We showed in the proof of part (1) of the lemma that the discriminant $\Delta_t$ is positive for  $w \ge 3$. Hence each horizontal line $w=w_0>3$ intersects the set  \{$H^*=0$\} for two values of $t$; one of them is positive and one of them is negative (because $c_0/c_2 < 0$); we denote them by $t_+$ and $t_-$. 

The discriminant $R_t:=$Res$(a_5^*,\partial a_5^* / \partial t, t)$ equals $2176782336(w-3)R_t^1R_t^2$, where 
 $$\begin{array}{ccl}
R_t^1&:=&5w^{12}+50w^{11}+100w^{10}-2513w^9+10781w^8-25932w^7+46604w^6\\ \\ 
&&-70411w^5+86678w^4-82706w^3+65264w^2-43104w+16896~, \\ \\
R_t^2&:=&8w^4+154w^3-68w^2-239w-352~. 
\end{array}$$
The factor $R_t^1$ is without real roots. The real roots of $R_t^2$ (both simple) equal $-19.61 \ldots$ and $1.81\ldots$. Hence for each $w=w_0>3$, the polynomial $a_5^*$ has one and the same number of real roots. Their signs do not change with $t$. Indeed, $a_5^*$ is a degree $5$ polynomial in $t$, with leading coefficient and constant term equal to $-8$ and $3w(w-3)(w^2+2w-6)$ respectively; the real roots of the quadratic factor equal $-3.64 \ldots$ and $1.64 \ldots$. 

For $w_0>3$, the polynomial $a_5^*$ has exactly 3 real roots $t_1<t_2<t_3$. For any $w_0>3$, the signs of these roots and of the roots $t_{\pm}$ of $H^*$ and the order of these 5 numbers on the real line are the same. For $w=4$, one has 

$$ t_1=-3.3 \ldots < t_-=-1.6 \ldots < t_2 = -0.8 \ldots < t_+=0.2 \ldots < t_3=0.3 \ldots $$          
Hence the only positive root $t_3$ of $a_5^*$ belongs to the domain where $H^*>0$. Hence one cannot have $a_5^*=0$ and $H^*<0$ at the same time. The lemma is proved. 

\end{proof}

\begin{proof}[Proof of Lemma~\ref{lmHA4}]
Part (1). One has 
 $$\begin{array}{ccl}
a_4^*&:=&-20t^4-22t^3w-30t^2w^2-10tw^3+w^4+66t^3+45t^2w+36tw^2\\ \\ 
&&+15w^3-135t^2-54tw-54w^2+108t+54w-81~. 
\end{array}$$
Consider $a_4^*$ as a polynomial in $t$. Its discriminant $\Delta_t^{\bullet}:=$Res$(a_4^*,\partial a_4^* / \partial t, t)$ is of the form $170061120 \Delta^{\flat} \Delta^{\sharp} (w^2-w+1)^2 $, where 
 $$\begin{array}{ccll}
\Delta^{\flat}&:=&9w^4+48w^3+82w^2+56w+205&{\rm and}\\ \\ 
\Delta^{\sharp} &:=&3w^4+14w^3-63w^2+51w-82~. 
\end{array}$$
Only the factor $\Delta^{\sharp}$ has real roots, and these equal $w_-:=-7.72 \ldots$ and $w_+:=2.56 \ldots$; they are simple. For $w \in (w_-,w_+)$, the quantity $a_4^*$ is negative. Indeed, $a_4^*|_{w=0}=-20t^4+66t^3-135t^2+108t-81$ which polynomial has no real roots; hence this is the case of  $a_4^*|_{w=w_0}$ for any $w_0 \in (w_-,w_+)$. This proves part (1), because the set $\Omega_3$ belongs to the strip $\{w_-<w<w_+\}$.   

Part (2). The discriminant Res$(a_4^*,H^*,t)$ equals $-26244R^{\triangle}(w^2-w+1)^2$ whose factor 

$$R^{\triangle}:=2w^6+16w^5-61w^4+23w^3-40w^2+16w-5$$
has exactly two real (and simple) roots which equal $-10.90 \ldots$ and $2.68 \ldots$. Hence for $w \geq 3>w_+$, 
\vspace{1mm}

(1) the sets $\{ H^*=0 \}$ and $\{ a_4^*=0 \}$ do not intersect;
\vspace{1mm}

(2) the numbers of positive and negative roots of $H^*$ and $a_4^*$ do not change; for $H^*$ this follows from formula (\ref{EqHt}); for $a_4^*$ whose leading coefficient as a polynomial in $t$ equals $-20$, this results from $a_4^*|_{t=0}=w^4+15w^3-54w^2+54w-81$ whose real roots $-18.1\ldots$ and $2.5 \ldots$ (both simple) are $<3$.         
\vspace{1mm}

Hence for $w =w_0 \geq 3$, one has $h_-<A_-<0 \leq h_+ <A_+$, where $h_-$ and $h_+$ (resp. $A_-$ and $A_+$) are the two roots of $H^*|_{w=w_0}$ (resp. of $a_4^*|_{w=w_0}$), with equality only for $w_0=3$. It is sufficient to check this string of inequalities for one value of $w_0$, say, for $w_0=4$, in which case one obtains 

$$h_-=-1.63 \ldots <A_-=-1.26 \ldots <  h_+=0.22 \ldots <A_+=0.85 \ldots~.$$
Hence for $w=w_0 \geq 3$, the only positive root of the polynomial $a_4^*|_{w=w_0}$ belongs to the domain $\{ H^*>0 \}$. This proves part (2) of the Lemma.

\end{proof}

\section{Proofs of Lemmas~\protect\ref{lmH4ter}, 
\protect\ref{lmH5} and \protect\ref{lmH6}
\protect\label{prlmter}}

\begin{proof}[Proof of Lemma~\ref{lmH4ter}:]
\begin{nota}\label{notaP}
{\rm If $\zeta _1$, $\zeta _2$, $\ldots$, $\zeta _k$ are distinct roots 
of the polynomial $P$ (not necessarily simple), then by $P_{\zeta _1}$, 
$P_{\zeta _1,\zeta _2}$, $\ldots$, $P_{\zeta _1,\zeta _2,\ldots ,\zeta _k}$ we denote the 
polynomials} 

$$P/(x-\zeta _1)~,~P/(x-\zeta _1)(x-\zeta _2)~,~\ldots~,~P/(x-\zeta _1)(x-\zeta _2)\ldots (x-\zeta _k)~.$$
\end{nota}

Denote by $u$, $v$, $w$ and $t$ the four distinct roots of $P$ (all nonzero). 
Hence 

$$P=(x-u)^m(x-v)^n(x-w)^p(x-t)^q~,~m+n+p+q=9~.$$ 
For $j=1$, $4$ or $5$, we show that the Jacobian $3\times 4$-matrix 
$J:=(\partial (a_{8},a_7,a_j)/\partial (u,v,w,t))^t$ 
(where $a_{8}$, $a_7$, $a_j$ are the corresponding coefficients of $P$ 
expressed as functions of $(u,v,w,t)$) is of rank $3$. 
(The entry in position $(2,3)$ of $J$ is $\partial a_7/\partial w$.) Hence 
one can vary the values of $(u,v,w,t)$ in such a way that $a_{8}$ and $a_j$ 
remain fixed (the value of $a_{8}$ being $-1$) and $a_7$ takes all possible 
nearby values. Hence the polynomial is not $a_7$-maximal. 

The entries of the four columns of $J$ are the coefficients of $x^{8}$, $x^7$ 
and $x^j$ of the polynomials 
$-mP_u=\partial P/\partial u$, $-nP_v$, $-pP_w$ and $-qP_t$. 
By abuse of language 
we say that the linear space $\mathcal{F}$ spanned 
by the columns of $J$ is generated by the polynomials 
$P_u$, $P_v$, $P_w$ and $P_t$. As 

$$P_{u,v}=\frac{P_u-P_v}{v-u}~~~\, ,~~~\, P_{u,w}=\frac{P_u-P_w}{w-u}~~~\,  {\rm and}~~~\, P_{u,t}=\frac{P_u-P_t}{t-}~~~\, ,~~~\, $$ 
one can choose as generators of $\mathcal{F}$ 
the quadruple ($P_u$, $P_{u,v}$, $P_{u,w}$, $P_{u,t}$); in the same 
way one can choose ($P_u$, $P_{u,v}$, $P_{u,v,w}$, $P_{u,v,t}$) or 
($P_u$, $P_{u,v}$, $P_{u,v,w}$, $P_{u,v,w,t}$) (the latter polynomials 
are of respective 
degrees $8$, $7$, $6$ and $5$). As $(x-t)P_{u,v,w,t}=P_{u,v,w}$, 
$(x-w)P_{u,v}=P_{u,v,w}$ etc. one can choose as generators the quadruple 

$$\psi :=(x^3P_{u,v,w,t}~,~x^2P_{u,v,w,t}~,~xP_{u,v,w,t}~,~P_{u,v,w,t})~.$$ 
Set $P_{u,v,w,t}:=x^5+Ax^4+\cdots +G$. The coefficients 
of $x^{8}$, $x^7$ and $x^5$ of the quadruple $\psi$ define the matrix 
$J^*:=\left( \begin{array}{cccc}1&0&0&0 \\A&1&0&0 \\ D&C&B&A\end{array}
\right)$. 
Its columns span the space $\mathcal{F}$ hence rank$\, J^*=$rank$\, J$. 
As at least one of 
the coefficients $B$ and $A$ is nonzero (Lemma~\ref{lmhyp}) one has 
rank$\, J^*=3$ and the lemma follows (for the case $j=6$). In the 
cases $j=5$ and 
$j=1$ the last row of $J^*$ equals respectively $(~E~D~C~B~)$ and 
$(~0~0~G~F~)$ and in the same way rank$\, J^*=3$.

\end{proof}

\begin{proof}[Proof of Lemma~\ref{lmH5}:]
We are using Notation~\ref{notaP} and the method of proof of 
Lemma~\ref{lmH4ter}. Denote by $u$, $v$, $w$, $t$, $h$ the 
five distinct real roots of $P$ (not necessarily simple). 
Thus using Lemma~\ref{lmHtriple} one can assume that

\begin{equation}\label{possible}
P=(x+u)^{\ell}(x+v)^m(x+w)^n(x-t)^2(x-h)~~~,~~~u, v, w, t, h > 0~~~,~~~
\ell +m+n=6~.\end{equation}
Set 
$J:=(\partial (a_{8},a_7,a_j,a_1)/\partial (u,v,w,t,h))^t$, $j=4$ or $5$. 
The columns of $J$ span a linear space $\mathcal{L}$ defined by 
analogy with the space $\mathcal{F}$ of the proof of Lemma~\ref{lmH4ter}, 
but spanned by $4$-vector-columns.

Set $P_{u,v,w,t,h}:=x^4+ax^3+bx^2+cx+d$. Consider the vector-column 
$$(0,0,0,0,1,a,b,c,d)^t~.$$
The similar vector-columns defined when using the polynomials $x^sP_{u,v,w,t,h}$, 
$1\leq s\leq 4$, instead of $P_{u,v,w,t,h}$ 
are obtained from this one by successive shifts 
by one position upward. To obtain generators of $\mathcal{L}$ one has 
to restrict these vector-columns to the rows corresponding to $x^{8}$ 
(first), $x^7$ (second), $x^j$ ($(9-j)$th) and $x$ (eighth row). 

Further we assume that $a_1=0$. 
If this is not the case, then at most one of the 
conditions $a_4=0$ and $a_5=0$ is fulfilled and the proof of the lemma can be 
finished by analogy with the proof of Lemma~\ref{lmH4ter}. 

Consider the case $j=5$. 
Hence the rank of $J$ is the same as the 
rank of the matrix 

$$M:=\left( \begin{array}{ccccc}1&0&0&0&0\\ a&1&0&0&0\\ c&b&a&1&0\\ 
0&0&0&d&c\end{array}\right) ~~~
\begin{array}{c}x^{8}\\ x^7\\ x^5\\ x\end{array}~.$$
One has rank$\, M=2+$rank$\, N$, where 
$N=\left( \begin{array}{ccc}a&1&0\\ 0&d&c\end{array}\right)$. Given that 
$d\neq 0$, see Lemma~\ref{lmnot0bis}, one can have rank$\, N<2$ only if $a=c=0$. We show that the 
condition $a=c=0$ leads to the contradiction that one must have $a_{8}>0$. 
We set $u=1$ to reduce the number 
of parameters, so we require only the inequality $a_{8}<0$, 
but not the equality $a_{8}=-1$, to hold true. We have to consider the following cases for 
the values of the triple $(\ell ,m,n)$ 
(see (\ref{possible})): 
1) $(4,1,1)$, 2) $(3,2,1)$ and 3) $(2,2,2)$. Notice that 

$$P_{u,v,w,t,h}|_{u=1}=(x+1)^{\ell -1}(x+v)^{m-1}(x+w)^{n-1}(x-t)~.$$

In case 1) one has 
\begin{equation}\label{case1}
a=3-t~,~b=3-3t~,~c=1-3t~{\rm and}~d=-t~, 
\end{equation}
so the condition $a=c=0$ leads to the contradiction $3=t=1/3$. 

In case 2) one obtains 
\begin{equation}\label{case2}
a=2+v-t~,~b=1+2v-(2+v)t~,~c=v-(1+2v)t~{\rm and}~d=-vt~. 
\end{equation}
Thus, the condition $a=c=0$ yields $v=-1$, $t=1$. This is also a contradiction because $v$ must be positive. 

In case 3) one gets 
\begin{equation}\label{case3}
\begin{array}{lcl}
a=1+v+w-t~,&&b=v+(1+v)w-(1+v+w)t~, \\ \\
c=vw-(v+(1+v)w)t&~~~{\rm and}~~~&d=-vwt~.
\end{array} 
\end{equation}
When one expresses $v$ and $w$ as functions of $t$ from the system of equations $a=c=0$, one obtains two possible solutions: $v=t$, $w=-1$ and $v=-1$, $w=t$. In both cases one of the variables ($v$, $w$) is negative which is a contradiction.

Now consider the case $j=4$. The matrices $M$ and $N$ equal respectively 

$$M:=\left( \begin{array}{ccccc}1&0&0&0&0\\ a&1&0&0&0\\ d&c&b&a&1\\ 
0&0&0&d&c\end{array}\right) ~~~~,~~~
N=\left( \begin{array}{ccc}b&a&1\\ 0&d&c\end{array}\right) ~.$$

One has rank$\, N<2$ only for $b=0$, $d=ac$ (because $d \neq 0$). 

In case 1) these conditions lead to the contradiction $1=t=(3/2) \pm \sqrt{5}/2$ see (\ref{case1}). 

In case 2) one expresses the variable $t$ from the condition $b=0$: $t=t^{\bullet}:=(1+2v)/(2+v)$. Set $a^{\bullet}:=a|_{t=t^{\bullet}}$, $c^{\bullet}:=c|_{t=t^{\bullet}}$  and $d^{\bullet}:=d|_{t=t^{\bullet}}$. The quantity $d^{\bullet}-a^{\bullet}c^{\bullet}$ equals $3(v^2+v+1)^2/(2+v)^2$ which vanishes for no $v \geq 0$. So case 2) is also impossible.    

In case 3) the condition $b=0$ implies $t=t^{\triangle}:=(vw+v+w)/(1+v+w)$. Set $a^{\triangle}:=a|_{t=t^{\triangle}}$, $c^{\triangle}:=c|_{t=t^{\triangle}}$  and $d^{\triangle}:=d|_{t=t^{\triangle}}$. The quantity $d^{\triangle}-a^{\triangle}c^{\triangle}$ equals $(w^2+w+1)(v^2+v+1)(v^2+vw+w^2)/(1+v+w)^2$ which is positive for any $v \geq 0$, $w \geq 0$. Hence case 3) is impossible. The lemma is proved.      

\end{proof}

\begin{proof}[Proof of Lemma~\ref{lmH6}:]
We use the same ideas and notation as in the proof of Lemma~\ref{lmH5}. 
Six of the six or more real roots of $P$ are denoted by $(u,v,w,t,h,q)$. 
The space $\mathcal{L}$ is defined by analogy with the one of the 
proof of Lemma~\ref{lmH5}. 
The Jacobian matrix $J$ is of the form 

$$J:=(\partial (a_{8},a_7,a_j,a_1)/\partial (u,v,w,t,h,q))^t~.$$ 
Set $P_{u,v,w,t,h,q}:=x^3+ax^2+bx+c$ and consider  
the vector-column  

$$(0,0,0,0,0,1,a,b,c)^t~.$$
Its successive shifts by one position upward correspond to the polynomials 
$x^sP_{u,v,w,t,h,q}$, $s\leq 5$. In the case $j=5$ 
the matrices $M$ and $N$ 
look like this:

$$M=\left( \begin{array}{cccccc}1&0&0&0&0&0\\ a&1&0&0&0&0\\ 
c&b&a&1&0&0\\ 0&0&0&0&c&b\end{array}\right)~~~\, , ~~~\, 
N=\left( \begin{array}{cccc}a&1&0&0\\ 0&0&c&b\end{array}\right)~.$$
One has rank$\, M=2+$rank$\, N$ and rank$\, N=2$, 
because at least one of 
the two coefficients $b$ and $c$ is nonzero (Lemma~\ref{lmhyp}). Hence 
rank$\, M=4$ and the lemma is proved by analogy with Lemmas~\ref{lmH4ter} and 
\ref{lmH5}. 
In the case $j=4$ the matrices $M$ and $N$ look like this: 
$$M=\left( \begin{array}{cccccc}1&0&0&0&0&0\\ a&1&0&0&0&0\\ 
0&c&b&a&1&0\\ 0&0&0&0&c&b\end{array}\right)~~~\, , ~~~\, 
N=\left( \begin{array}{cccc}b&a&1&0\\ 0&0&c&b\end{array}\right)~.$$
The matrix $N$ is of rank $4$, because either $b \neq 0$ or $b=0$ and both $a$ and $c$ are nonzero (Lemma~\ref{lmhyp}). Hence rank $M=4$.   

\end{proof}

\section{Proof of part (2) of Theorem~\protect\ref{maintm}\protect\label{secpr2}}

We remind that we consider polynomials with positive leading coefficients. For $d=9$, we denote by $\sigma$ a sign pattern and by $\sigma^*$ the shortened sign pattern (obtained from $\sigma$ by deleting its last component)   

\begin{lm}\label{lm9s2} 
For $d=9$, if $pos \geq 2$ and $neg \geq 2$, then such a couple (sign pattern, admissible pair) is realizable. 
\end{lm}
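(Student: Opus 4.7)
The plan is to realize the degree $9$ couple $(\sigma,(pos,neg))$ by concatenation with a single linear factor, using Lemma~\ref{lmconcat} together with the fact recorded in Remark~\ref{rem8g1}: every couple (sign pattern, admissible pair) in degree $8$ with both components of the admissible pair at least $1$ is realizable.

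Let $\sigma$ have Descartes pair $(c,p)$, $c+p=9$, and inspect the last two entries of $\sigma$. If they form a sign change, then $\sigma^*$ has Descartes pair $(c-1,p)$ and the pair $(pos-1,neg)$ is admissible for it (the bound $pos\leq c$ becomes $pos-1\leq c-1$, and both parities are preserved). If instead they form a sign preservation, then $\sigma^*$ has Descartes pair $(c,p-1)$ and $(pos,neg-1)$ is admissible for it. In either case the hypothesis $pos\geq 2$, $neg\geq 2$ is exactly what makes the reduced pair have both components at least $1$, so by Remark~\ref{rem8g1} there exists a monic degree $8$ polynomial $P_1$ realizing the reduced couple, with all real roots simple.

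In the sign-change case I take $P_2=x-\alpha$, $\alpha>0$, which realizes the sign pattern $(+,-)$ and the pair $(1,0)$; in the sign-preservation case I take $P_2=x+\beta$, $\beta>0$, realizing $(+,+)$ and $(0,1)$. Depending on whether the last entry of $\sigma^*$ is $+$ or $-$, part~(1) or part~(2) of Lemma~\ref{lmconcat} applies, and in each of the four subcases one checks directly that the two final signs of the concatenated sign pattern match those of $\sigma$: when the last sign of $\sigma^*$ is $-$, the flip in part~(2) is precisely what turns $(+,-)$ into $(-,+)$ or $(+,+)$ into $(-,-)$. For $\varepsilon>0$ small the root contributed by $P_2$ after rescaling is close to $0$, hence distinct from every root of $P_1$ (whose constant term is nonzero by admissibility), so the new root is simple and the concatenated polynomial realizes $(\sigma,(pos,neg))$.

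The argument is essentially bookkeeping; the only thing to verify carefully is the matching of final signs in all four end-configurations. The threshold $pos\geq 2$, $neg\geq 2$ is tight, because at $pos=1$ or $neg=1$ the reduction would produce a degree $8$ couple with a vanishing admissible-pair component, a regime where nonrealizability can occur. The remaining case $(pos,neg)=(1,6)$ with sign pattern $\sigma^0$ (and those equivalent to it under the $\mathbb{Z}_2\times\mathbb{Z}_2$-action) is exactly what part~(1) of Theorem~\ref{maintm} treats.
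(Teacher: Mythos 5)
Your proof is correct and follows essentially the same route as the paper: drop the last component of $\sigma$, note that $(pos-1,neg)$ or $(pos,neg-1)$ (according to whether the last two signs differ or agree) is admissible for $\sigma^*$ with both entries $\geq 1$, invoke Remark~\ref{rem8g1} to get a degree $8$ realizing polynomial, and concatenate with a linear factor $x\mp\alpha$ via Lemma~\ref{lmconcat}. Your explicit verification of the end-sign matching under the flip in part~(2) of Lemma~\ref{lmconcat} is a detail the paper leaves implicit, but the argument is the same.
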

 
\begin{proof}
Suppose that the last two components of $\sigma$ are equal (resp. different). Then the pair ($pos$, $neg-1$) (resp. ($pos-1$, $neg$))  is admissible for the sign pattern $\sigma^*$ and the couple $(\sigma^*,~(pos~, ~neg-1))$ (resp. $(\sigma^*,~(pos-1~, ~neg))$) is realizable by some degree $8$ polynomial $P$, see Remark~\ref{rem8g1}. Hence the couple $(\sigma, ~(pos ~,~neg))$ is realizable by the concatenation of the polynomials $P$ and $x+1$ (resp. $P$ and $x-1$).
 \end{proof}
 
 Lemma~\ref{lm9s2} implies that in any nonrealizable couple with $pos > 0$ and $neg > 0$, one of the numbers $pos$, $neg$ equals 1. Using the  the standard $\mathbb{Z}_2\times \mathbb{Z}_2$-action (i.e changing if necessary $P(x)$ to $-P(-x)$) one can assume that $pos=1$. This implies that the last component of the sign pattern is~$-$. 
 
\begin{lm}\label{lm9s3} 
For $d=9$, if $pos =1$, $neg \geq 2$ and the last two components of $\sigma$ are $(-~,~-)$, then such a couple $(\sigma, ~(pos ~,~neg))$ is realizable.
\end{lm}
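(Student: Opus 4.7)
The plan is to copy the strategy of the proof of Lemma~\ref{lm9s2}: reduce to the known degree $8$ realizability guaranteed by Remark~\ref{rem8g1}, and then lift back to degree $9$ by concatenating with a suitable linear factor via Lemma~\ref{lmconcat}.

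First I would strip the terminal entry of $\sigma$ to form the shortened sign pattern $\sigma^*$ of length $9$. Because the last two components of $\sigma$ are $(-,-)$, the removed component sits at the end of a sign preservation, so if the Descartes pair of $\sigma$ is $(c,p)$, that of $\sigma^*$ is $(c,p-1)$, and $\sigma^*$ still ends in $-$. A direct check of the four conditions in (\ref{posneg}) shows that the pair $(1,neg-1)$ is admissible for $\sigma^*$, since both bounds and both parity congruences descend from those for $(1,neg)$ and $\sigma$ as $neg$ and $p$ each drop by one. Invoking $pos=1\ge 1$ and $neg-1\ge 1$ (this is the only place where the hypothesis $neg\ge 2$ is used), Remark~\ref{rem8g1} produces a degree $8$ polynomial $P$ realizing $(\sigma^*,(1,neg-1))$.

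Next I would apply part (2) of Lemma~\ref{lmconcat} with $P_1:=P$, whose sign pattern ends in $-$, and $P_2:=x+1$, of sign pattern $(+,+)$ and realizing $(0,1)$. The lemma then yields, for sufficiently small $\varepsilon>0$, a polynomial $\varepsilon P(x)(x/\varepsilon+1)$ whose sign pattern is $\sigma^*$ followed by the negation of the unique non-leading sign of $(+,+)$, namely $-$; this is precisely $\sigma$. The realized pair of simple root counts is $(1+0,(neg-1)+1)=(1,neg)$, as required.

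No substantive obstacle is anticipated. The only point that requires care is the sign bookkeeping needed to verify that the terminal $-$ of $\sigma^*$ triggers clause (2) rather than clause (1) of Lemma~\ref{lmconcat}, and that the resulting sign pattern is indeed $\sigma$; everything else is a routine admissibility check together with the two cited results.
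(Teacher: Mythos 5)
Your proposal is correct and follows exactly the paper's own argument: realize the shortened pattern $(\sigma^*,(1,neg-1))$ in degree $8$ via Remark~\ref{rem8g1}, then concatenate with $x+1$ using part (2) of Lemma~\ref{lmconcat}. The extra bookkeeping you supply (admissibility of $(1,neg-1)$, the role of $neg\ge 2$, and why the terminal $-$ of $\sigma^*$ invokes clause (2) so that the appended sign is $-$) is all accurate and merely makes explicit what the paper leaves implicit.
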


\begin{proof}
The couple $(\sigma^*,~(pos~, ~neg-1))$ is realizable by some polynomial $P$, see Remark~\ref{rem8g1}. Hence the concatenation of $P$ and $x+1$ realizes the couple $(\sigma, ~(pos ~,~neg))$. 
\end{proof}

Hence for any nonrealizable couple $(\sigma, ~(pos ~,~neg))$, one has $pos=1$, $neg \geq 2$ and the last two components of $\sigma$ are $(+~,~-)$. Thus, the couple $(\sigma^*,~(0~, ~neg))$ is nonrealizable; The first and the last components of $\sigma^*$ are $+$. There are 19 such couples modulo the  $\mathbb{Z}_2\times \mathbb{Z}_2$-action, see~\cite{Ko2}:

$$\begin{array}{lllll}
{\rm Case}&&{\rm Sign~pattern}&&{\rm Admissible~pair(s)} \\ \\
A&&(+ + - - - - - + +)&&(0,6) \\ \\
B&&(+ - - - - - - + +)&&(0,6) \\ \\
C&&(+ + + + - - - - +)&&(0,6) \\ \\
D&&(+ + + - - - - - +)&&(0,6) \\ \\
E&&(+ - + - - - + - +)&&(0,2) \\ \\
F&&(+ - + - + - - - +)&&(0,2) \\ \\
G1-G2&&(+ - + - - - - - +)&&(0,2)~,~(0,4) \\ \\
H1-H2&&(+ - - - + - - - +)&&(0,2)~,~(0,4) \\ \\
I1-I3&&(+ - - - - - - - +)&&(0,2)~,~(0,4)~,~(0,6) \\ \\
\end{array}$$

$$\begin{array}{lllll}
J&&(+ + + - - - - + +)&&(0,6) \\ \\
K&&(+ - - - - + - - +)&&(0,4) \\ \\
L&&(+ - - - - - - + +)&&(0,4) \\ \\
M&&(+ - + + - - - - +)&&(0,4) \\ \\
N&&(+ - + - - - - + +)&&(0,4) \\ \\
Q&&(+ - - - - + - + +)&&(0,4) \\ \\
\end{array}$$

\noindent To obtain all couples $(\sigma^*,~(0~, ~neg))$ giving rise to nonrealizable couples $(\sigma,~(1~, ~neg))$ by concatenation with $x-1$, one has to add to the above list of cases ($A-Q$) the cases obtained from them by acting with the first generator of the $\mathbb{Z}_2\times \mathbb{Z}_2$-action, i.e. the one replacing $\sigma$ by $\sigma^r$, see Definition~\ref{z2act}. The second generator (the one replacing $\sigma$ by $\sigma^m$) has to be ignored, because it exchanges the two components of the admissible pair and the condition $pos=1$ could not be maintained. The cases that are to be added are denoted by ($A^r-Q^r$). E.g. 

$$N^r~~~\,~~~\,(+ + - - - - + - +)~~~\,~~~\,(0,4)~.$$
One can observe that, due to the center-symmetry of certain sign patterns, one has $A=A^r$, $E=E^r$, $Hj=Hj^r$, $j=1,~2$ and $Ij=Ij^r$, $j=1,~2,~3$. 

With the only exception of case $C^r$, we show that all cases ($A-Q$) and ($A^r-Q^r$), are realizable which proves part (2) of the theorem. We do this by means of Lemma~\ref{lmconcat}. We explain this first for the following cases:
$$\begin{array}{llllllllllll}
B^r~,&C~,&D~,&E~,&F~,&F^r~,&G1~,&G1^r~,&G2~,&G2^r~,&H1~,&H2~,\\ \\
I1~,&I2~,&I3~,& K~,&K^r~,&L^r~,&M~,&M^r~,&N^r&{\rm and}&Q^r~.
\end{array}$$
In all of them the last three components of $\sigma$ are $(- + -)$, and we set $P_2^{\dagger}:=x^2-x+1$ (see part (2) of Lemma~\ref{lmconcat}). The polynomial $P_2^{\dagger}$ has no real roots and defines the sign pattern $\sigma^{\dagger}:=(+ - +)$. Denote by $\tilde{\sigma}$ the sign pattern obtained from $\sigma$ by deleting its two last components. Hence $(1,neg)$ is an admissible pair for the sign pattern $\tilde{\sigma}$, and the couple $(\tilde{\sigma},(1,neg))$ is realizable by some degree $7$ monic polynomial $\tilde{P_1}$, see Remark~\ref{rem8g1}. By Lemma~\ref{lmconcat} the concatenation of $\tilde{P_1}$ and $P_2^{\dagger}$ realizes the couple $(\sigma,(1,neg))$. 

In cases $A$, $B$, $J$, $L$, $N$ and $Q$, the last four components of the sign pattern $\sigma$ are $(- + + -)$. We set $P_2^\triangle:=(x+2)((x^2-2)+1)=x^3-2x^2-3x+10$. Hence $P_2^\triangle$ realizes the couple $((+--+),(0,1))$. Denote by $\sigma^\triangle$ the sign pattern obtained from $\sigma$ by deleting its three last components. Hence $(1,neg-1)$ is an admissible pair for the sign pattern $\sigma^\triangle$, and the couple $(\sigma^\triangle,(1,neg-1))$ is realizable by some degree $6$ monic polynomial $P_1^\triangle$, see Remark~\ref{rem8g1}. By Lemma~\ref{lmconcat} the concatenation of $P_1^\triangle$ and $P_2^\triangle$ realizes the couple $(\sigma,(1,neg))$.

In the two remaining cases $D^r$ and $J^r$, the last six components of $\sigma$ are $(- - + + + -)$. The sign pattern $\sigma^{\ddagger}:=(+ + - - - +)$ is realizable by some degree $5$ polynomial $P_2^\ddagger$, see \cite{AlFu}. Denote by $\sigma^{\diamond}$ the sign pattern obtained from $\sigma$ by deleting its five last components. Hence in cases $D^r$ and $J^r$ one has $\sigma^{\diamond}=(+ - - - -)$ and $\sigma^{\diamond}=(+ + - - -)$ respectively. Thus the couple $(\sigma^{\diamond},(1,3))$ is realizable by some monic degree $4$ polynomial $P_1^\diamond$ (see Remark~\ref{rem8g1}), and the concatenation of $P_1^\diamond$ and $P_2^\ddagger$ realizes the couple $(\sigma,(1,neg))$. Part (2) of Theorem~\ref{maintm} is proved.

%
%


\begin{thebibliography}{Dillo 83}


\bibitem {AlFu} {\it A.~Albouy, Y.~Fu}: Some remarks about Descartes' rule 
of signs. Elem. Math. {\bf 69} (2014), 186--194. 
Zbl 1342.12002, MR3272179

\bibitem {AJS} {\it B.~Anderson, J.~Jackson, M.~Sitharam}: Descartes' rule of 
signs revisited. Am. Math. Mon. {\bf 105} (1998), 447--451.
Zbl 0913.12001, MR1622513


\bibitem{Ca} {\it F.~Cajori}: A history of the arithmetical methods 
of approximation
to the roots of numerical equations of one unknown quantity. Colorado
College Publication, Science Series {\bf 12--7} (1910), 171--215.

\bibitem{YoVlHa} {\it H.~Cheriha, Y.~Gati, V.~P.~Kostov}: Descartes' rule of signs, Rolle's theorem and sequences of admissible pairs. (submitted). 
arXiv:1805.04261. 

\bibitem{FoKoSh} {\it J.~Forsg\aa rd, B.~Shapiro, V.~P.~Kostov}: 
Could Ren\'e Descartes have known this? Exp. Math. {\bf 24}, No. 4 (2015), 
438-448. 
Zbl 1326.26027, MR3383475

\bibitem{Fo} {\it J.~Fourier}: Sur l'usage du th\'eor\`eme de Descartes dans 
la recherche
des limites des racines. Bulletin des sciences par la Soci\'et\'e philomatique
de Paris (1820) 156--165, 181--187; {\oe}uvres 2,  291--309, Gauthier-
Villars, 1890.


\bibitem{Ga} {\it C.~F.~Gauss}: Beweis eines algebraischen Lehrsatzes. 
J. Reine Angew. Math. {\bf 3} (1828) 1--4; Werke 3, 67--70, 
G\"ottingen, 1866.
ERAM 003.0089cj, MR1577673

\bibitem{Gr} {\it D.~J.~Grabiner}: Descartes' Rule of Signs: 
Another Construction. Am. Math. Mon. {\bf 106} (1999) 854--856.
Zbl 0980.12001, MR1732666

\bibitem{KhS} {\it B.~Khesin, B.~Shapiro}: Swallowtails and 
Whitney umbrellas are homeomorphic.   
J. Algebraic Geom. {\bf 1}, issue 4 (1992), 549--560. 
Zbl 0790.57019, MR1174901

\bibitem{Ko1} {\it V.~P.~Kostov}: Topics on hyperbolic polynomials 
in one variable. 
Panoramas et Synth\`eses {\bf 33} (2011), vi + 141 p., SMF.
Zbl 1259.12001, MR2952044

\bibitem {Ko11} {\it V.~P.~Kostov}, Polynomials, sign patterns and Descartes' rule of signs. Mathematica Bohemica 144 (2019), No. 1, 39-67. 

\bibitem{Ko2} {\it V.~P.~Kostov}: On realizability of sign patterns by real 
polynomials. Czechoslovak Math. J. (to appear) arXiv:1703.03313. 

\bibitem{Ko3} {\it V.~P.~Kostov}: On a stratification defined by real roots of 
polynomials. Serdica Math. J. {\bf 29}, No. 2 (2003), 177--186. 
Electronic preprint math.AG/0208219.
Zbl 1049.12002, MR1992553

\bibitem{KoSh} {\it V.~P.~Kostov, B.~Shapiro}: Something you always wanted 
to know 
about real polynomials (but were afraid to ask). (submitted). 
arXiv:1703.04436. 





\end{thebibliography}
\end{document}